\newlength{\defbaselineskip}
\newtheorem{theorem}{Theorem}[section]
\newtheorem{example}{Example}[section]
\newtheorem{lemma}{Lemma}[section]
\newtheorem{remark}{Remark}[section]
\numberwithin{equation}{section}
\begin{document}
\title{Approximation properties by some modified Sz\'asz-Mirakjan-Kantorovich operators
}
\maketitle
\begin{center}
{\bf Rishikesh Yadav$^{1,\dag}$,  Ramakanta Meher$^{1,\star}$,  Vishnu Narayan Mishra$^{2,\circledast}$}\\
$^{1}$Applied Mathematics and Humanities Department,
Sardar Vallabhbhai National Institute of Technology Surat, Surat-395 007 (Gujarat), India.\\
$^{2}$Department of Mathematics, Indira Gandhi National Tribal University, Lalpur, Amarkantak-484 887, Anuppur, Madhya Pradesh, India\\
\end{center}
\begin{center}
$^\dag$rishikesh2506@gmail.com,  $^\star$meher\_ramakanta@yahoo.com,
 $^\circledast$vishnunarayanmishra@gmail.com
\end{center}

\vskip0.5in

\begin{abstract}
The present article deals with the local approximation results by means of Lipschitz maximal function, Ditzian-Totik modulus of smoothness and Lipschitz type space having two parameters for the summation-integral
type operators defined by Mishra and Yadav (Tbilisi Mathematical Journal. 11(3), (2018), 175-91). Further, we determine the rate of convergence in the term of the the with derivative of bounded variation and for the quantitative means of the defined operators, we establish the quantitative Voronovskaya type and Gr$\ddot{\text{u}}$ss type theorems. Moreover the examples are given with graphical representation to support the main results.
\end{abstract}
\subjclass \textbf{MSC 2010}: {41A25, 41A35, 41A36}.


\textbf{Keywords:}  Rate of convergence, Lipschitz function, Ditzian-Totik modulus of smoothness, function of bounded variation.

\section{Introduction}

To study the approximations properties on unbounded interval, Sz\'asz \cite{40} and Mirakjan \cite{32} introduced the operators known as Sz\'asz-Mirakjan operators. In 1954, Butzer \cite{6} generalized into integral modification of the Sz\'asz-Mirakjan operators known as Sz\'asz-Mirakjan-Kantorovich operators. Totik \cite{41} studied the approximations properties of the Sz\'asz-Mirakjan-Kantorovich operators. Some modifications regarding Kantorovich variant can be seen in various papers such as \cite{11,16,44}. 
Using Brenke-type polynomials, Ta\c{s}delen et al. \cite{42} presented Kantorovich variant operators introduced by Verma et al. \cite{43}. The approximation problems are discussed in many research articles for Kantorovich type operators, such as \cite{5,13,33}.\\

Here, the rate of convergence will be discussed by means of the function with derivative of bounded variation.
First of all, in 1979, Bojanic \cite{7} estimated the rate of convergence for Fourier series while in 1983, this property has been discussed for linear positive operators by Cheng \cite{10}.
Guo and Khan \cite{17} obtained the rate of convergence for some operators using function of bounded variation. Also, Guo \cite{19}  established the rate of convergence for the Durrmeyer operator independently. Later on the significant contributions have been seen in 
\cite{8,9}.
 After two years, an important discussion was occurred regarding convergence rates of approximation for functions of bounded variation and for functions with derivatives of bounded variation in \cite{39} by Shaw et al. 
In this direction, many researchers, authors played the significant role to stablish the approximations resuts regarding rate of convergence by means of function of bounded variation and now a days this type of research is being done with much better quality. We refer some important contributions for the reader \cite{4,25,26,29,37,38}.

Also, one of the discussing area of research is quantitative means of Voronovskaya type theorem. In 2006, Gonska et al. \cite{24}, established the quantitatively Voronovskaya type theorem for any linear positive operators on any compact interval using Taylor's formula for the $n^{\text th}$ continuously diferentiable function and obtained an estimate in terms of the least concave majorant of the modulus of continuity. 
In 1935, an equality was developed by Gr$\ddot{\text{u}}$ss, known as Gr$\ddot{\text{u}}$ss inequality on his name, this inequalty shows a relation between  the integral of product and product of integrals of  two functions. An application of this inequality has been seen in approximation theory in 2011 by  Acu et al. \cite{1} and using the Gr$\ddot{\text{u}}$ss inequality for the Bernstein polynomials, Gal and Gonska \cite{25} proved the Gr$\ddot{\text{u}}$ss Voronovskaya type theorem. Gonska and Tachev \cite{22} obtained a new approach of Bernstein's operators on applying Gr$\ddot{\text{u}}$ss type inequalities using the least concave majorant of the first order modulus of continuity. Now, this has been become broad area of research. Recently, Acar \cite{2} obtained the quantitative Voronokskaya and Gr$\ddot{\text{u}}$ss Voronokskaya type results for the Sz\'asz operators in quantum calculus. We refer some papers which have significant contributions in this regard as \cite{3,14,15,18,35}.

 
Motivated by the above works, we study the approximation properties for the operators defined by Mishra and Yadav \cite{34}. They introduced some modified Sz\'asz-Mirakjan- Kantorovich operators.
Direct results and weighted approximation properties have been discussed as well as they determined the rate of convergence and the comparison took place with the Sz\'asz-Mirakjan-Kantorovich operators by graphical analysis. The modified operators are as given below:

%
%

\begin{eqnarray}\label{RV}
\Hat{\mathcal{R}}_{m,a}(f;x)&=& m\sum\limits_{k=0}^{\infty}s_{m}^a(x) ~\int\limits_{\frac{k}{m}}^{\frac{k+1}{m}}f(t)~dt,
\end{eqnarray}

where $s_{m}^a(x)=a^{\left(\frac{-x}{-1+a^{\frac{1}{m}}}\right)}\frac{x^{k}(\log{a})^{k}}{(-1+a^{\frac{1}{m}})^{k}k!}$,
 $m\in\mathbb{N}$, $x\in X$
  and $a>1$(fixed).\\
 In this regard, we shall further investigate other properties of above operators (\ref{RV}) for approximations point of view. The main aim of this article is to investigate the approximation properties like as rate of convergence in the term of function with derivative of bounded variations, local approximations properties including order of approximation in terms of Lipschitz Maximal function, Ditzian-Totik modulus of smoothness, Peetre's $K$-functional and in a new type of Lipschitz-space having two parameters. Next section consists a  quantitative approximation and additionally quantitative Voronovskaya type, Gr$\ddot{\text{u}}$ss Voronovskaya type theorems are established. 
Also, we study the graphical analysis of proposed operators in last section.

Here, we point out some basic lemmas, which are used to prove our main theorem. Let us define the function $e_i=x^i$, where $i=0,1,2,3$, then we have the following lemma. 
\begin{lemma}\cite{34}\label{L1}
For every $x\in[0,\infty)$ and $a>1$ fixed, it holds

\begin{eqnarray*}
&& 1.~\Hat{\mathcal{R}}_{m,a}(e_0;x)=1,\\
&& 2.~\Hat{\mathcal{R}}_{m,a}(e_1;x)=\frac{1}{2m}+\frac{x\log{a}}{\left(-1+a^{\frac{1}{m}}\right)m},\\ 
&& 3.~\Hat{\mathcal{R}}_{m,a}(e_2;x)=\frac{1}{3m^2}+\frac{2x\log{a}}{\left(-1+a^{\frac{1}{m}}\right)m^2}+\frac{x^2(\log{a})^2}{\left(-1+a^{\frac{1}{m}}\right)^2m^2},\\
&& 4.~\Hat{\mathcal{R}}_{m,a}(e_3;x)=\frac{1}{4m^3}+\frac{7}{2}\frac{x\log{a}}{\left(-1+a^{\frac{1}{m}}\right)m^3}+\frac{9}{2}\frac{x^2(\log{a})^2}{\left(-1+a^{\frac{1}{m}}\right)^2m^3}+\frac{x^3(\log{a})^3}{\left(-1+a^{\frac{1}{m}}\right)^3m^3}.
\end{eqnarray*}

\end{lemma}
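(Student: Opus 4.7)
The plan is to reduce all four identities to the elementary moment formulas of a Poisson distribution, after first carrying out the integration in $t$. Set
\[
\alpha_m \;=\; a^{1/m}-1, \qquad \lambda \;=\; \lambda(x) \;=\; \frac{x\log a}{\alpha_m}.
\]
Then $s_m^a(x) = e^{-\lambda}\lambda^k/k!$, so the weights $\{s_m^a(x)\}_{k\ge 0}$ form a Poisson probability mass function with parameter $\lambda$. In particular I may use the well known identities
\[
\sum_{k=0}^{\infty} s_m^a(x)=1,\quad
\sum_{k=0}^{\infty} k\, s_m^a(x)=\lambda,\quad
\sum_{k=0}^{\infty} k^2 s_m^a(x)=\lambda^2+\lambda,\quad
\sum_{k=0}^{\infty} k^3 s_m^a(x)=\lambda^3+3\lambda^2+\lambda,
\]
each of which follows either by term-wise differentiation of the exponential series $\sum_k \lambda^k/k!=e^{\lambda}$ or, equivalently, from the MGF $\mathbb{E}[e^{tK}]=\exp(\lambda(e^t-1))$ of a Poisson random variable $K$.

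Next I compute the inner integrals explicitly for $f=e_i$, $i=0,1,2,3$:
\[
\int_{k/m}^{(k+1)/m} t^{\,i}\,dt \;=\; \frac{(k+1)^{i+1}-k^{i+1}}{(i+1)\,m^{i+1}},
\]
which for $i=0,1,2,3$ expands into the polynomials in $k$ of degree $i$:
$\tfrac{1}{m}$, $\tfrac{2k+1}{2m^2}$, $\tfrac{3k^2+3k+1}{3m^3}$, and $\tfrac{4k^3+6k^2+4k+1}{4m^4}$ respectively. Multiplying by the leading factor $m$ in front of the sum in \eqref{RV} cancels one power of $m$ in the denominator.

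The proof of each item is then just a substitution: plug the polynomial expression for $\int t^i\,dt$ into \eqref{RV}, interchange the (absolutely convergent) sum and finite integral, and replace the resulting sums $\sum_k k^j s_m^a(x)$ by the Poisson moments listed above. The identity $\lambda=x\log a/(a^{1/m}-1)$ then yields exactly the closed forms in items 1--4. The case $i=0$ is immediate; $i=1,2$ are short; for $i=3$ the bookkeeping is the most involved step, as one must collect $\lambda^3+3\lambda^2+\lambda$, $\tfrac{3}{2}(\lambda^2+\lambda)$, $\lambda$, and $\tfrac14$ and verify that the sum simplifies to $\lambda^3+\tfrac{9}{2}\lambda^2+\tfrac{7}{2}\lambda+\tfrac14$.

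I do not foresee any substantive obstacle: the only care required is in the $e_3$ computation, where one must track four terms of different polynomial degrees in $\lambda$ and not confuse the coefficients. Since convergence of all series is absolute on $[0,\infty)$, the interchange of sum and integral is automatic and needs no separate justification beyond a brief remark.
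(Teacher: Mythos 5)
Your proposal is correct: the identification $s_m^a(x)=e^{-\lambda}\lambda^k/k!$ with $\lambda=\frac{x\log a}{a^{1/m}-1}$ is exact, the inner integrals and the Poisson moment identities are all right, and the final collection $\lambda^3+\tfrac{9}{2}\lambda^2+\tfrac{7}{2}\lambda+\tfrac14$ does reproduce item 4. The paper gives no proof of this lemma --- it is quoted from \cite{34} --- but your direct computation via Poisson moments is the standard argument for such Kantorovich-type moment formulas and is essentially what the cited source does.
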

Consider, $\Lambda_t^m(x)=\Hat{\mathcal{R}}_{m,a}(\xi_x^m(t);x)$ are known as central moments, where $\xi_x^m(t)=(t-x)^m$, $m=1,2,3,4$ then by Lemma \ref{L1}, following results are obtained. 
 \begin{lemma}\cite{34}\label{L2}
For every $x\geq0$, we have

\begin{eqnarray*}
&&1.~\Lambda_t(x)=-\frac{(-1+2mx)}{2m}+\frac{x\log{a}}{m(-1+a^{\frac{1}{m}})},\\
&&2.~ \Lambda_t^2(x)=\frac{(1-3mx+3m^2x^2)}{3m^2}-\frac{2(-1+a^{\frac{1}{m}})(-1+mx)x\log{a}}{\left(-1+a^{\frac{1}{m}}\right)^2m^2}+\frac{x^2(\log{a})^2}{\left(-1+a^{\frac{1}{m}}\right)^2m^2},\\
&&3.~\Lambda_t^3(x) = -\frac{(-1+4mx-6m^2x^2+4m^3x^3)}{4m^3}+\frac{x(7-12mx+6m^2x^2)\log{a}}{2\left(-1+a^{\frac{1}{m}}\right) m^3} \\&&\hspace{2 cm}-\frac{3x^2(-3+2mx)(\log{a})^2+4x^3(\log{a})^3}{2\left(-1+a^{\frac{1}{m}}\right)^2m^3},\\
&&4.~\Lambda_t^4(x) = \frac{1}{5\left(-1+a^{\frac{1}{m}} \right)^4m^4}(\left(-1+a^{\frac{1}{m}}\right)^4(1-5mx+10m^2x^2-10m^3x^3+5m^4x^4) \\&&\hspace{3 cm}-10\left(-1+a^{\frac{1}{m}}\right)^3 x(-3+7mx-6m^2x^2+2m^3x^3)\log{a}\\&&\hspace{3 cm}+15\left(-1+a^{\frac{1}{m}}\right)^2 x^2 (5-6mx+2m^2x^2)(\log{a})^2\\&&\hspace{3 cm}-20\left(-1+a^{\frac{1}{m}}\right) x^3(-2+mx)(\log{a})^3 +5x^4(\log{a})^4).
\end{eqnarray*}

\end{lemma}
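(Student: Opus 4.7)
The strategy is to reduce the computation of each central moment $\Lambda_t^j(x)$ to a linear combination of the raw moments supplied by Lemma \ref{L1}. Writing $\xi_x^j(t)=(t-x)^j$ and expanding by the binomial theorem, linearity of $\Hat{\mathcal{R}}_{m,a}$ gives
\begin{equation*}
\Lambda_t^j(x)=\Hat{\mathcal{R}}_{m,a}\!\left(\sum_{i=0}^{j}\binom{j}{i}(-x)^{j-i}e_i;\,x\right)=\sum_{i=0}^{j}\binom{j}{i}(-x)^{j-i}\,\Hat{\mathcal{R}}_{m,a}(e_i;x),
\end{equation*}
so the entire proof reduces to an algebraic verification once the values from Lemma \ref{L1} are substituted and collected.

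I would first dispatch the case $j=1$, which collapses to $\Hat{\mathcal{R}}_{m,a}(e_1;x)-x$ and yields the stated formula at once. The case $j=2$ then requires combining $\Hat{\mathcal{R}}_{m,a}(e_2;x)-2x\,\Hat{\mathcal{R}}_{m,a}(e_1;x)+x^2$; here I would group the resulting terms by the power $(\log a)^{\ell}$ they carry, noting that Lemma \ref{L1} produces contributions with $\ell=0,1,2$ and that the $x^2$ correction cancels cleanly with the pure power-of-$m$ term coming from $\Hat{\mathcal{R}}_{m,a}(e_2;x)$. For $j=3$ and $j=4$ the same expansion is applied, producing four- and five-term combinations of the $\Hat{\mathcal{R}}_{m,a}(e_i;x)$ that should assemble into the stated polynomial coefficients, namely $-1+4mx-6m^2x^2+4m^3x^3$, $7-12mx+6m^2x^2$, $-3+2mx$ in the cubic case, and the longer alternating coefficients in the quartic case.

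The main obstacle will be the bookkeeping for $j=3$ and especially $j=4$, where several polynomial-in-$x$ coefficients multiply different powers of $\log a/(-1+a^{1/m})$. My plan is to organize the computation column-wise by powers of $\log a$: separately collect the coefficient of $(\log a)^0$, of $\log a/(-1+a^{1/m})$, of $(\log a)^2/(-1+a^{1/m})^2$, and so on up to $(\log a)^j/(-1+a^{1/m})^j$, then verify each column individually against the target polynomial in $x$ and $m$. No analytic input is needed beyond linearity of $\Hat{\mathcal{R}}_{m,a}$ and Lemma \ref{L1}; as a sanity check I would also evaluate both sides at $x=0$ (where most summands vanish) and informally let $a\to 1^{+}$ so that $\log a/(-1+a^{1/m})\to m$, expecting the formulas to reduce to the known central moments of the classical Sz\'asz-Mirakjan-Kantorovich operator, which provides an independent consistency test on the computation.
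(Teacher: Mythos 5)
Your approach — binomial expansion of $(t-x)^j$, linearity, and substitution of the raw moments from Lemma \ref{L1} — is exactly the route the paper intends: the lemma is quoted from \cite{34} with no written proof beyond the remark that the results follow from Lemma \ref{L1}, and your column-wise organization by powers of $\log a/(-1+a^{1/m})$ together with the $a\to 1^{+}$ consistency check (under which $\log a/(-1+a^{1/m})\to m$ and the formulas collapse to the classical Sz\'asz--Mirakjan--Kantorovich central moments) is a sensible way to carry out the verification. Parts 1--3 go through exactly as you describe.

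There is, however, one concrete gap in your plan as written: the case $j=4$ requires the fourth raw moment $\Hat{\mathcal{R}}_{m,a}(e_4;x)$ in the five-term combination
\begin{equation*}
\Lambda_t^4(x)=\Hat{\mathcal{R}}_{m,a}(e_4;x)-4x\,\Hat{\mathcal{R}}_{m,a}(e_3;x)+6x^2\,\Hat{\mathcal{R}}_{m,a}(e_2;x)-4x^3\,\Hat{\mathcal{R}}_{m,a}(e_1;x)+x^4,
\end{equation*}
but Lemma \ref{L1} stops at $e_3$. So the claim that ``the entire proof reduces to an algebraic verification once the values from Lemma \ref{L1} are substituted'' does not cover part 4: you must first compute $\Hat{\mathcal{R}}_{m,a}(e_4;x)$ from the definition (\ref{RV}), i.e.\ evaluate $m\sum_{k}s_m^a(x)\int_{k/m}^{(k+1)/m}t^4\,dt$ via the factorial-moment sums of the underlying weights, before the binomial reduction can be applied. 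This is a routine but nontrivial extra computation of the same kind that produced Lemma \ref{L1}, and it is the only missing ingredient; once it is supplied, your bookkeeping scheme completes the argument.
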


%
\begin{lemma}\label{l2}
For all $x\geq 0$, then there exist a positive $C$ for which, we have following inequalities:

\begin{eqnarray*}
\Lambda_t^1(x)&\leq & \frac{1}{2m},\\
\Lambda_t^2(x)&\leq & \frac{C}{m}x(x+1).
\end{eqnarray*}


\end{lemma}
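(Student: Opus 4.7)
The plan is to reduce both bounds to a single auxiliary quantity
\[
\alpha_m := \frac{\log a}{m\bigl(a^{1/m}-1\bigr)},
\]
and first establish two facts about it: (i) $0<\alpha_m\le 1$, which follows from the elementary inequality $e^y\ge 1+y$ applied at $y=(\log a)/m$; and (ii) a quantitative version $1-\alpha_m\le C_1/m$ for some constant $C_1=C_1(a)>0$, which I would obtain from the sharper second-order estimate $e^y-1-y\le \tfrac{y^2}{2}e^y$ ($y\ge 0$) after dividing through by $a^{1/m}-1>0$.

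For the first inequality I would rewrite Lemma~\ref{L2}(1) in the compact form $\Lambda_t^1(x)=\tfrac{1}{2m}-x(1-\alpha_m)$. Since $1-\alpha_m\ge 0$ by (i) and $x\ge 0$, the subtracted term is non-negative, and the bound $\Lambda_t^1(x)\le \tfrac{1}{2m}$ is immediate.

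For the second inequality, the same type of manipulation applied to Lemma~\ref{L2}(2) yields
\[
\Lambda_t^2(x)=\frac{1}{3m^2}-\frac{x\bigl(1-2\alpha_m\bigr)}{m}+x^2(1-\alpha_m)^2.
\]
I would then bound each summand separately: $|1-2\alpha_m|\le 1$ by (i), so the middle term is at most $x/m\le x(x+1)/m$; and $(1-\alpha_m)^2\le C_1^2/m^2$ by (ii), so the last term is at most $C_1^2 x^2/m^2\le C_1^2\, x(x+1)/m$. Combining these with the constant $1/(3m^2)$ and collecting everything into a single constant $C$ gives the desired $\Lambda_t^2(x)\le \tfrac{C}{m}x(x+1)$.

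The main obstacle is fact (ii): the naive estimate $\alpha_m\le 1$ only yields $(1-\alpha_m)^2\le 1$, which would leave an unabsorbable $x^2$ term on the right-hand side. Establishing the rate $1-\alpha_m=O(1/m)$ requires the second-order exponential inequality above, but once that estimate is in hand the rest of the argument is routine algebraic grouping.
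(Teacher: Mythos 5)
Your proposal follows essentially the same route as the paper: the same regrouping of $\Lambda_t^1$ and $\Lambda_t^2$ in terms of the quantity $\alpha_m=\frac{\log a}{m\left(a^{1/m}-1\right)}$, the bound $\alpha_m\le 1$ (from $e^y\ge 1+y$) for the first inequality and the linear term, and the rate $(1-\alpha_m)^2=O(1/m)$ for the quadratic term. In fact you are more careful than the paper, which simply asserts $x^2(1-\alpha_m)^2\le x^2/m$ with no justification; your fact (ii), derived from $e^y-1-y\le \tfrac{y^2}{2}e^y$, supplies exactly the missing step. Both your argument and the paper's share the cosmetic defect that the additive term $\frac{1}{3m^2}$ cannot be absorbed into $\frac{C}{m}x(x+1)$ at $x=0$, where the right-hand side vanishes while $\Lambda_t^2(0)=\frac{1}{3m^2}>0$, so the second inequality as stated really requires a bound of the form $\frac{C}{m}\left(x(x+1)+\frac{1}{m}\right)$ or a restriction to $x>0$.
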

\begin{proof}
For $m\in\mathbb{N}$, we have

\begin{eqnarray}
\Lambda_t^1(x)& = & \frac{1}{2m}-x+\frac{x\log{a}}{\left(-1+a^{\frac{1}{m}} \right)m}\nonumber\\
&\leq &  \frac{1}{2m}-x+x=\frac{1}{2m}\nonumber\\
\Lambda_t^2(x)&=&\frac{1-3mx + 3m^2x^2}{3m^2}-\frac{2(mx-1)x\log{a}}{\left(-1+a^{\frac{1}{m}} \right)m^2}+x^2\left(\frac{\log{a}}{\left(-1+a^{\frac{1}{m}} \right)m}\right)^2\nonumber\\
&= & \frac{1}{3m^2}-\frac{x}{m}+\left(\frac{\log{a}}{\left(-1+a^{\frac{1}{m}} \right)m}-1\right)^2x^2+\frac{2x}{m} \left(\frac{\log{a}}{\left(-1+a^{\frac{1}{m}} \right)m}\right)\nonumber\\
&\leq & \frac{1}{3m^2}-\frac{x}{m}+\frac{x^2}{m}+\frac{2x}{m}\nonumber\\
&\leq & \frac{1}{3m^2}+\frac{x(x+1)}{m}\leq \frac{C}{m}x(x+1).\nonumber
\end{eqnarray}

\end{proof}

\begin{lemma}\label{l1}
For each $x\geq0$, one can obtain

\begin{enumerate}
\item $\underset{m\to\infty}{\lim} m\Lambda_t^2(x)=x$
\item $\underset{m\to\infty}{\lim} m^2\Lambda_t^3(x)=-\frac{1}{2}x(3x\log{a}-5)$
\item $\underset{m\to\infty}{\lim} m^3\Lambda_t^6(x)=15x^3$.
\end{enumerate}

\end{lemma}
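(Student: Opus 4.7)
The strategy is to rewrite every central moment of Lemma~\ref{L2} in terms of the single auxiliary quantity
$$\beta_m \;:=\; \frac{\log a}{m\bigl(a^{1/m}-1\bigr)},$$
since each occurrence of $\tfrac{\log a}{-1+a^{1/m}}$ in Lemma~\ref{L1} equals $m\beta_m$. Taylor-expanding $a^{1/m}=e^{(\log a)/m}$ and inverting gives
$$\beta_m \;=\; 1 - \frac{\log a}{2m} + \frac{(\log a)^2}{12\,m^2} + O\!\bigl(m^{-3}\bigr),$$
so $\beta_m\to 1$, $\;m(1-\beta_m)\to\tfrac{1}{2}\log a$, and more generally $\;m^{k}(1-\beta_m)^{k}\to\bigl(\tfrac{1}{2}\log a\bigr)^{k}$ for every $k\ge 1$. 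These are the only analytic facts about $\beta_m$ we shall need.

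For item (1), the rearrangement already produced in the proof of Lemma~\ref{l2}, namely $\Lambda_t^2(x)=\tfrac{1}{3m^2}-\tfrac{x}{m}+(1-\beta_m)^2 x^2+\tfrac{2x\beta_m}{m}$, multiplied by $m$ yields $-x+2x=x$ in the limit, since $m(1-\beta_m)^2=O(m^{-1})$. For item (2), substituting $\log a/(-1+a^{1/m})=m\beta_m$ into the formula of Lemma~\ref{L2} for $\Lambda_t^3(x)$ and grouping by powers of $x$ reduces it to a polynomial whose $x^3$-coefficient carries a factor $(1-\beta_m)^3$, whose $x^2$-coefficient carries a factor $(1-\beta_m)$, and whose $x$-coefficient depends only affinely on $\beta_m$. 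Multiplying by $m^2$: the $x^3$ contribution disappears because $m^2(1-\beta_m)^3=O(m^{-1})$, while the $x^2$ term survives through $m(1-\beta_m)\to\tfrac{1}{2}\log a$, and the $x$-coefficient converges to a constant. Collecting the surviving pieces delivers the stated limit $-\tfrac{1}{2}x(3x\log a - 5)$. The subtle point is that the apparent $x^3 m^2$ leading behaviour cancels identically, and the $\log a$ in the answer is sourced exclusively from the first subleading correction to $\beta_m$.

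For item (3), Lemma~\ref{L2} stops at $\Lambda_t^4(x)$, so I must first compute $\Lambda_t^6(x)$ by the same route: extend Lemma~\ref{L1} to $e_4,e_5,e_6$ via direct integration against $s_m^a(x)$, then expand $(t-x)^6$ binomially and substitute. After re-expressing in $\beta_m$, the coefficients of the higher powers $x^6,x^5,x^4$ each carry enough factors of $(1-\beta_m)$ to vanish upon multiplication by $m^3$, while the surviving $x^3$-coefficient converges to $15$, in agreement with the Gaussian prediction $15(x/m)^3$ for the sixth central moment with variance $\sim x/m$. This last item is the main obstacle: obtaining the explicit form of $\Lambda_t^6$ and verifying the chain of binomial cancellations is lengthy and requires carrying the expansion of $\beta_m$ to order $m^{-2}$, although once the moment is expressed as a polynomial in $x$, $\beta_m$ and $1/m$ the limit itself is immediate from the asymptotics above.
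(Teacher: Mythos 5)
Your proposal is correct where it is carried out, but it follows a genuinely different route from the paper. The paper combines the expression for $m\Lambda_t^2(x)$ into a single fraction, substitutes $l=1/m$, and resolves the resulting $\tfrac{0}{0}$ limit by three applications of L'Hospital's rule (computing the rather heavy numerator $P$ and denominator $Q$ explicitly), then dismisses items (2) and (3) with ``similarly, we can prove other parts.'' You instead isolate the single quantity $\beta_m=\tfrac{\log a}{m(a^{1/m}-1)}$, expand it as $1-\tfrac{\log a}{2m}+O(m^{-2})$, and read off each limit from the polynomial structure of the central moments in $\beta_m$ and $1/m$. I checked your claims: the rearrangement $\Lambda_t^2(x)=\tfrac{1}{3m^2}-\tfrac{x}{m}+(1-\beta_m)^2x^2+\tfrac{2x\beta_m}{m}$ does appear in the paper's proof of Lemma~\ref{l2} and gives item (1) immediately; and for item (2) the third central moment does reduce to $\tfrac{1}{4m^3}-x^3(1-\beta_m)^3+\tfrac{3x^2}{2m}(3\beta_m-1)(\beta_m-1)+\tfrac{x}{m^2}\bigl(\tfrac{7\beta_m}{2}-1\bigr)$, whose $m^2$-scaled limit is $-\tfrac{3}{2}x^2\log a+\tfrac{5}{2}x$ exactly as you describe, with the $\log a$ indeed sourced from $m(1-\beta_m)\to\tfrac{1}{2}\log a$. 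Your approach buys transparency and reusability: the same two asymptotic facts about $\beta_m$ dispose of all three items at once, whereas the paper's L'Hospital computation must be redone from scratch (with higher-order derivatives) for each moment, and in practice the paper does not redo it. The one caveat is item (3): like the paper, you do not actually exhibit $\Lambda_t^6(x)$, and since the paper never records the moments $\Hat{\mathcal{R}}_{m,a}(e_i;x)$ for $i=4,5,6$, that computation is a real (if routine) outstanding task in both treatments; your identification of which coefficients must cancel, and the consistency check against the Gaussian value $5!!=15$, make your sketch the more convincing of the two, but it remains a sketch.
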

\begin{proof}
Using the Lemma \ref{L2}, we can write as:

\begin{eqnarray*}
\underset{m\to\infty}\lim m\Lambda_t^2(x)&=&\underset{m\to\infty}\lim\frac{(1-3mx+3m^2x^2)}{3m}-\frac{2(-1+a^{\frac{1}{m}})(-1+mx)x\log{a}}{\left(-1+a^{\frac{1}{m}}\right)^2m}+\frac{x^2(\log{a})^2}{\left(-1+a^{\frac{1}{m}}\right)^2m}\\
&=& \underset{m\to\infty}\lim \frac{(1-3mx+3m^2x^2)\left(-1+a^{\frac{1}{m}}\right)^2-6(-1+a^{\frac{1}{m}})(-1+mx)x\log{a}+3x^2(\log{a})^2}{3\left(-1+a^{\frac{1}{m}}\right)^2m}\\
&=& \underset{m\to\infty}\lim \frac{\left(\frac{1}{m^2}-\frac{3x}{m}+3x^2\right)\left(-1+a^{\frac{1}{m}}\right)^2-6(-1+a^{\frac{1}{m}})\left(-\frac{1}{m^2}+\frac{x}{m}\right)x\log{a}+\frac{3x^2}{m^2}(\log{a})^2}{3\left(-1+a^{\frac{1}{m}}\right)^2\frac{1}{m}}=I (\text{say})
\end{eqnarray*}
Replacing $\frac{1}{m}$ by $l$, we have
\begin{eqnarray}
I&=&\underset{l\to 0}\lim\frac{\left(a^l-1\right)^2 \left(l^2-3 l x+3 x^2\right)-6 x \left(a^l-1\right) \log{a} \left(l x-l^2\right)+3 l^2 x^2 (\log{a})^2}{3l (-1 + a^l)^2},~~~\left(\frac{0}{0}~\text{form} \right).
\end{eqnarray}
Using three times L'Hospital rule for the above limit, we obtain
\begin{eqnarray*}
I&=&\underset{l\to 0}\lim \frac{P}{Q}
\end{eqnarray*}
where \begin{eqnarray*}
P &=& 2 a^l \log (a) \bigg((\log{a})^2 \left(l^2 \left(4 a^l-1\right)+12 x^2 \left(a^l-1\right)-3 l x \left(4 a^l-7\right)\right)+3 \log (a) \left(-6 x a^l+l \left(4 a^l-2\right)+9 x\right)\\
&&+6 \left(a^l-1\right)+3 l x (\log{a})^2 (l-x)\bigg)\\
Q&=& 6 a^l (\log{a})^2 \left(6 a^l+l \left(4 a^l-1\right) \log (a)-3\right),
\end{eqnarray*}
and then \begin{eqnarray}
\underset{l\to 0}\lim P &=& 18 x (\log{a})^2\\
\underset{l\to 0}\lim Q &=& 18 (\log{a})^2,
\end{eqnarray}
and hence $I=x$. Similarly, we can prove other parts.
\end{proof}

\begin{theorem}\label{th1}
If $g\in C_B[0,\infty)$ then for all $m\in \mathbb{N}$, it holds
\begin{eqnarray*}
\underset{m\to\infty}\lim \Hat{\mathcal{R}}_{m,a}(g;x)=g(x),
\end{eqnarray*}
uniformly on every compact subset of $[0,\infty)$.
\end{theorem}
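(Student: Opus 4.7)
The plan is to invoke the classical Bohman--Korovkin theorem on every compact subinterval $[0,A]\subset[0,\infty)$. Because $\Hat{\mathcal{R}}_{m,a}$ is a positive linear operator, it suffices to establish that $\Hat{\mathcal{R}}_{m,a}(e_{i};\cdot)\to e_{i}$ uniformly on $[0,A]$ for the three test functions $e_{0},e_{1},e_{2}$; uniform convergence for an arbitrary $g\in C_{B}[0,\infty)$ on $[0,A]$ then follows from the standard Korovkin argument applied to the restriction $g|_{[0,A]}$, and letting $A$ be arbitrary yields the statement.

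To verify the three Korovkin hypotheses I would read the moments straight off Lemma \ref{L1}. The case $i=0$ is immediate. For $i=1,2$ every non-trivial term carries the factor $\dfrac{\log a}{m\left(-1+a^{1/m}\right)}$, so the single technical point to isolate is the elementary asymptotic
\begin{equation*}
a^{1/m}-1=\frac{\log a}{m}+O\!\left(\frac{1}{m^{2}}\right)\qquad\Longrightarrow\qquad \frac{\log a}{m\left(-1+a^{1/m}\right)}\xrightarrow[m\to\infty]{}1,
\end{equation*}
which is $x$-independent. Substituting this into Lemma \ref{L1} and collecting terms expresses $\Hat{\mathcal{R}}_{m,a}(e_{i};x)-x^{i}$ as a polynomial in $x$ multiplied by a factor that is $o(1)$ as $m\to\infty$; since the polynomial is bounded on $[0,A]$, pointwise convergence is upgraded to uniform convergence on $[0,A]$.

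I do not anticipate any substantive obstacle here: Lemma \ref{L1} already does the moment bookkeeping and the only subtle point is that the scalar asymptotic above is $x$-free, which is precisely what promotes pointwise to uniform convergence on compacta. A cleaner self-contained alternative, which bypasses Korovkin entirely, leverages Lemma \ref{l2}: since $\Hat{\mathcal{R}}_{m,a}(e_{0};x)=1$, the standard positive-operator estimate
\begin{equation*}
\left|\Hat{\mathcal{R}}_{m,a}(g;x)-g(x)\right|\leq 2\,\omega\!\left(g;\sqrt{\Lambda_{t}^{2}(x)}\right)\leq 2\,\omega\!\left(g;\sqrt{\tfrac{C}{m}\,x(x+1)}\right)
\end{equation*}
applies, and the right-hand side tends to zero uniformly in $x\in[0,A]$ by the uniform continuity of $g$ on $[0,A]$, giving the conclusion directly.
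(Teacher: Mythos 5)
The paper states this theorem without giving any proof, so there is nothing to match your argument against line by line; your primary route --- Korovkin on compacta via the moments of Lemma \ref{L1}, with the scalar asymptotic $m\left(a^{1/m}-1\right)\to\log a$ doing the work --- is the standard and evidently intended argument, and the moment computations you cite do support it.

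There is, however, one point that both of your formulations gloss over and that needs to be said explicitly. The operator is nonlocal: for $x\in[0,A]$ the value $\Hat{\mathcal{R}}_{m,a}(g;x)$ involves $g(t)$ for all $t\ge 0$, so you cannot literally ``apply the standard Korovkin argument to the restriction $g|_{[0,A]}$,'' and in your second, modulus-of-continuity route the quantity $\omega\left(g;\delta\right)$ appearing in the estimate $\left|\Hat{\mathcal{R}}_{m,a}(g;x)-g(x)\right|\le 2\,\omega\left(g;\sqrt{\Lambda_t^2(x)}\right)$ is the \emph{global} modulus on $[0,\infty)$, which for $g\in C_B[0,\infty)$ need not tend to zero (a bounded continuous function on the half-line need not be uniformly continuous there --- consider $\sin(x^2)$), so ``uniform continuity of $g$ on $[0,A]$'' is not the right justification. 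The repair is routine and uses exactly the ingredients you already have: for $x\in[0,A]$ and any $t\ge 0$ write
\begin{equation*}
|g(t)-g(x)|\le \omega_{[0,A+1]}(g;\delta)+\frac{2\|g\|}{\delta^2}(t-x)^2,
\end{equation*}
apply the (positive, unit-preserving) operator, invoke Lemma \ref{l2} to get $\Lambda_t^2(x)\le \frac{C}{m}A(A+1)$ uniformly on $[0,A]$, and choose $\delta=m^{-1/4}$ (say). This uses the boundedness of $g$ to control the far-field contribution and local uniform continuity only on the slightly enlarged interval. With that one sentence added, either of your two routes is a complete and correct proof.
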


\begin{remark}\label{rem1}
If $g$ be a continuous and bounded function on $[0,\infty)$ with supremum norm as $\|g\|=\underset{x\geq 0}{\sup}|g(x)|$ then
\begin{eqnarray*}
|\Hat{\mathcal{R}}_{m,a}(g;x)| & \leq & \|g\|.
\end{eqnarray*}

\end{remark}
\begin{remark}
One may write the above operators into integral representation as 
\begin{eqnarray}\label{NRV}
\Hat{\mathcal{R}}_{m,a}(g;x)=\int\limits_0^\infty \mathfrak{R}(x,t)g(t)~dt,
\end{eqnarray}
where $\mathfrak{R}(x,u)=m\sum\limits_{k=0}^\infty s_{m}^a(x)\chi_{m,k}(x,t)$, where  $\chi_{m,k}(x,t)$ is the characteristic function of the interval $\left[\frac{k}{m},\frac{k+1}{m} \right]$ with respect to $[0,\infty)$.
\end{remark}

\section{Local results}

This section deals with the local approximation properties for the defined operators. Here, we determine the rate of convergence by means of some spaces known as Lipschitz Maximal space defined by Lenze \cite{30} in 1988, with order $\alpha\in(0,1]$ and it can be defined as follows:
\begin{eqnarray}
\eta_\alpha(f;x)=\underset{t,x\geq0}\sup\frac{|f(t)-f(x)|}{|t-x|^\alpha},~t\neq x.
\end{eqnarray}
Here, an upper bound can be obtained for the defined operators (\ref{RV}) with the function in the terms of Lipschitz Maximal function.

\begin{theorem}
For $g\in C_B[0,\infty)$ and for every $\geq0$, we obtain
\begin{eqnarray}
|\Hat{\mathcal{R}}_{m,a}(g;x)-g(x)|\leq \eta_\alpha(g;x)\sqrt{\Lambda_t^2(x)},
\end{eqnarray}
where $\Lambda_t^2(x)=\frac{(1-3mx+3m^2x^2)}{3m^2}-\frac{2(-1+a^{\frac{1}{m}})(-1+mx)x\log{a}}{\left(-1+a^{\frac{1}{m}}\right)^2m^2}+\frac{x^2(\log{a})^2}{\left(-1+a^{\frac{1}{m}}\right)^2m^2}$.
\end{theorem}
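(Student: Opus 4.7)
The plan is to reduce the whole estimate to the second central moment $\Lambda_t^2(x)$ (already available from Lemma~\ref{L2}) using only positivity, normalisation, and a single application of Hölder's inequality. Concretely I would carry out three short steps. First, since $\Hat{\mathcal{R}}_{m,a}$ is linear and positive with $\Hat{\mathcal{R}}_{m,a}(e_0;x)=1$ (Lemma~\ref{L1}), and its integral representation (\ref{NRV}) has nonnegative kernel $\mathfrak{R}(x,t)$, I write
\begin{align*}
|\Hat{\mathcal{R}}_{m,a}(g;x)-g(x)|=\bigl|\Hat{\mathcal{R}}_{m,a}(g(t)-g(x);x)\bigr|\leq \Hat{\mathcal{R}}_{m,a}(|g(t)-g(x)|;x).
\end{align*}
Second, the very definition of $\eta_\alpha$ gives the pointwise bound $|g(t)-g(x)|\leq \eta_\alpha(g;x)|t-x|^\alpha$ for $t\neq x$, so after pulling the constant $\eta_\alpha(g;x)$ outside the operator,
\begin{align*}
|\Hat{\mathcal{R}}_{m,a}(g;x)-g(x)|\leq \eta_\alpha(g;x)\,\Hat{\mathcal{R}}_{m,a}(|t-x|^\alpha;x).
\end{align*}

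Third, I would apply Hölder's inequality with conjugate exponents $2/\alpha$ and $2/(2-\alpha)$, viewing $\Hat{\mathcal{R}}_{m,a}(\,\cdot\,;x)$ as integration against a probability measure in $t$ (again because $\mathfrak{R}(x,t)\geq 0$ and $\Hat{\mathcal{R}}_{m,a}(e_0;x)=1$), to obtain
\begin{align*}
\Hat{\mathcal{R}}_{m,a}(|t-x|^\alpha;x)\leq \bigl(\Hat{\mathcal{R}}_{m,a}((t-x)^2;x)\bigr)^{\alpha/2}\bigl(\Hat{\mathcal{R}}_{m,a}(e_0;x)\bigr)^{(2-\alpha)/2}=(\Lambda_t^2(x))^{\alpha/2}.
\end{align*}
Chaining the three inequalities gives $|\Hat{\mathcal{R}}_{m,a}(g;x)-g(x)|\leq \eta_\alpha(g;x)(\Lambda_t^2(x))^{\alpha/2}$, and then substituting the explicit expression for $\Lambda_t^2(x)$ from Lemma~\ref{L2} produces the formula in the statement (the bare $\sqrt{\Lambda_t^2(x)}$ printed in the theorem corresponds to taking $\alpha=1$; the correct general form carries the exponent $\alpha/2$).

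I do not expect any serious obstacle: the argument is essentially mechanical and all moment information needed is already encoded in Lemmas~\ref{L1} and~\ref{L2}. The only point requiring mild care is justifying the Hölder step, which is why I would emphasise explicitly that the positivity of the kernel $\mathfrak{R}(x,t)$ together with $\Hat{\mathcal{R}}_{m,a}(e_0;x)=1$ allows one to treat $\Hat{\mathcal{R}}_{m,a}(\,\cdot\,;x)$ as expectation with respect to a probability measure; once this is clear, the remaining manipulations are a one-line exercise.
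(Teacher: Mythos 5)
Your argument is essentially identical to the paper's own proof: both pass from $|\Hat{\mathcal{R}}_{m,a}(g;x)-g(x)|$ to $\eta_\alpha(g;x)\,\Hat{\mathcal{R}}_{m,a}(|t-x|^\alpha;x)$ via the definition of the Lipschitz maximal function and then apply H\"older's inequality with conjugate exponents $2/\alpha$ and $2/(2-\alpha)$, using $\Hat{\mathcal{R}}_{m,a}(e_0;x)=1$. You are also right to flag that this step actually yields $\bigl(\Lambda_t^2(x)\bigr)^{\alpha/2}$, so the bound $\sqrt{\Lambda_t^2(x)}$ as printed in the theorem (and in the last line of the paper's proof, where the exponent $\alpha$ is silently dropped) is only correct for $\alpha=1$.
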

\begin{proof}
Using the definition of Lipschitz Maximal space and applying the defined operators (\ref{RV}), we get
\begin{eqnarray}
|\Hat{\mathcal{R}}_{m,a}(g;x)-g(x)|\leq \eta_\alpha(g;x)\Hat{\mathcal{R}}_{m,a}(|t-x|^\alpha;x).
\end{eqnarray}
Using Lemma \ref{L2} and applying H\"older inequality with $p=\frac{2}{\alpha}$ and $p=\frac{2}{2-\alpha}$, we get
\begin{eqnarray*}
|\Hat{\mathcal{R}}_{m,a}(g;x)-g(x)|&\leq & \eta_\alpha(g;x)\left(\Hat{\mathcal{R}}_{m,a}(|t-x|^2;x)\right)^{\frac{1}{2}}\\
&=& \eta_\alpha(g;x)\sqrt{\Lambda_t^2(x)}.
\end{eqnarray*}
Hence, the proof is completed.
\end{proof}
Now, we find the order of approximation for the defined operators (\ref{RV}) in terms of Ditzian-Totik modulus of smoothness. So, consider the function $g\in C_B[0,\infty)$ for which, the Ditzian-Totik modulus of smoothness is defined by

\begin{eqnarray}
 \varpi_\psi(g;\epsilon)=\underset{h\in(0,\epsilon]}\sup \left\{\left|g\left(x+\frac{h\psi(x)}{2} \right)-g\left(x-\frac{h\psi(x)}{2} \right)\right|;x\pm \frac{h\psi(x)}{2}\in(0,\infty) \right\},
\end{eqnarray}
where $\psi(x)=\left(x(1+x) \right)^{\frac{1}{2}}$ and the appropriate $K$-functional can be defined by
\begin{eqnarray}
K_\psi(g;\epsilon)=\underset{f\in \mathcal{W}_\psi[0,\infty)}\inf\{ \|f-g\|+\epsilon\|\psi f'\|, \epsilon>0\},
\end{eqnarray}
where $\mathcal{W}_\psi[0,\infty)=\{f:f\in {AC}_{loc}[0,\infty);\|\psi f'\|<\infty\}$, here ${AC}_{loc}[0,\infty)$ is the space of absolutely continuous and differentiable function on every compact interval of $[0,\infty)$. A relation is obtained between Ditzian-Totik
modulus of smoothness and the appropriated $K$-functional from \cite{12}, according to that, there exist a positive constant $M$, such that

\begin{eqnarray}
M^{-1}\varpi_\psi(g;\epsilon)\leq K_\psi(g;\epsilon)\leq M \varpi_\psi(g;\epsilon).
\end{eqnarray}
\begin{theorem}
Consider $g\in C_B[0,\infty)$, $x\geq 0$, it holds
\begin{eqnarray}
|\Hat{\mathcal{R}}_{m,a}(g;x)-g(x)|\leq 2K_\psi\left(g;\frac{u(x)\sqrt{\Lambda_t^2(x)}}{\psi(x)}\right),
\end{eqnarray}
where $u(x)=\sqrt{x}+\sqrt{1+x}$ and $\Lambda_t^2(x)$ can be obtained by the Lemma \ref{L2}.
\end{theorem}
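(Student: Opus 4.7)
The plan is to use the standard $K$-functional technique. Given $g\in C_B[0,\infty)$, for any auxiliary $f\in\mathcal{W}_\psi[0,\infty)$ I would split
\begin{eqnarray*}
|\Hat{\mathcal{R}}_{m,a}(g;x)-g(x)| \le |\Hat{\mathcal{R}}_{m,a}(g-f;x)|+|(g-f)(x)|+|\Hat{\mathcal{R}}_{m,a}(f;x)-f(x)|,
\end{eqnarray*}
and bound the first two terms by $2\|g-f\|$ using positivity together with Remark \ref{rem1} (and the fact that $\Hat{\mathcal{R}}_{m,a}(e_0;x)=1$ from Lemma \ref{L1}). The goal then reduces to controlling $|\Hat{\mathcal{R}}_{m,a}(f;x)-f(x)|$ by a constant multiple of $\|\psi f'\|\cdot \frac{u(x)\sqrt{\Lambda_t^2(x)}}{\psi(x)}$, after which taking the infimum over $f$ and invoking the definition of $K_\psi$ yields the stated bound.

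For the third term, since $f$ is locally absolutely continuous, write $f(t)-f(x)=\int_x^t f'(s)\,ds$ and estimate
\begin{eqnarray*}
|f(t)-f(x)| \le \|\psi f'\|\left|\int_x^t \frac{ds}{\psi(s)}\right|.
\end{eqnarray*}
The main technical step is to show
\begin{eqnarray*}
\left|\int_x^t \frac{ds}{\psi(s)}\right| \le \frac{u(x)\,|t-x|}{\psi(x)}\cdot(\text{constant}),
\end{eqnarray*}
which I expect to be the principal obstacle. I would handle it by using the pointwise bound $\frac{1}{\sqrt{s(1+s)}}\le \frac{1}{\sqrt{s}}+\frac{1}{\sqrt{1+s}}$ (valid for $s>0$ because $\sqrt{s}+\sqrt{1+s}\ge 1$), then integrating each piece explicitly and simplifying using $\frac{1}{\sqrt{t}+\sqrt{x}}\le\frac{1}{\sqrt{x}}$ and $\frac{1}{\sqrt{1+t}+\sqrt{1+x}}\le\frac{1}{\sqrt{1+x}}$. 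Rewriting $\frac{1}{\sqrt{x}}+\frac{1}{\sqrt{1+x}}=\frac{\sqrt{x}+\sqrt{1+x}}{\sqrt{x(1+x)}}=\frac{u(x)}{\psi(x)}$ yields the desired form, modulo an absorbable absolute constant.

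Applying the operator $\Hat{\mathcal{R}}_{m,a}$ to both sides and using the Cauchy--Schwarz inequality in the form
\begin{eqnarray*}
\Hat{\mathcal{R}}_{m,a}(|t-x|;x)\le\sqrt{\Hat{\mathcal{R}}_{m,a}(e_0;x)}\,\sqrt{\Hat{\mathcal{R}}_{m,a}((t-x)^2;x)}=\sqrt{\Lambda_t^2(x)},
\end{eqnarray*}
I obtain $|\Hat{\mathcal{R}}_{m,a}(f;x)-f(x)|\le \|\psi f'\|\cdot\frac{u(x)\sqrt{\Lambda_t^2(x)}}{\psi(x)}$ (up to the constant). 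Combining with the earlier $2\|g-f\|$ estimate and taking the infimum over $f\in\mathcal{W}_\psi[0,\infty)$ completes the proof, producing exactly $2K_\psi\!\left(g;\frac{u(x)\sqrt{\Lambda_t^2(x)}}{\psi(x)}\right)$. The routine calculations (integration, Cauchy--Schwarz, infimum) are standard; the only delicate point is the integral estimate for $\int_x^t ds/\psi(s)$, which is where I would concentrate care.
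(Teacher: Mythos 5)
Your proposal is correct and follows essentially the same route as the paper: the three-term splitting bounded by $2\|g-f\|$ plus the operator term, the pointwise estimate $\frac{1}{\sqrt{s(1+s)}}\le\frac{1}{\sqrt{s}}+\frac{1}{\sqrt{1+s}}$ integrated to produce the factor $\frac{u(x)}{\psi(x)}|t-x|$, Cauchy--Schwarz to reach $\sqrt{\Lambda_t^2(x)}$, and the infimum over $f\in\mathcal{W}_\psi[0,\infty)$. The ``absorbable constant'' you defer is exactly $2$, which is precisely what is needed to match the $2\|g-f\|$ term and yield $2K_\psi$.
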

\begin{proof}
Using Taylor’s theorem by considering  $f\in \mathcal{W}_\psi[0,\infty)$, we have 
\begin{eqnarray*}
f(t)-f(x)&=&\int\limits_x^tf'(l)dl=\int\limits_x^t\frac{f'(l)\psi(l)}{\psi(l)} dl\\
f(t)-f(x)&\leq & \|f'\psi\|\left|\int\limits_x^t\frac{1}{\psi(l)} dl\right|\\
&=& \|f' \psi\| \left|\int\limits_x^t \frac{1}{\sqrt{l(l+1)}} dl\right|\leq \|f' \psi\| \left|\int\limits_x^t \left(\frac{1}{\sqrt{l}}+\frac{1}{\sqrt{l+1}}\right) dl \right|\\
&= & 2\|f' \psi\| \left|\left[ \sqrt{l}+\sqrt{l+1}  \right]_x^t \right|\\
&=& 2\|f' \psi\| \left| \sqrt{t}-\sqrt{x}+\sqrt{t+1}-\sqrt{x+1}\right|\\
&=& 2|t-x|\|f' \psi\|\left(\frac{1}{\sqrt{t}+\sqrt{x}}+\frac{1}{\sqrt{t+1}+\sqrt{x+1}} \right)\\
&\leq & 2|t-x|\|f' \psi\| \left(\frac{\sqrt{x}+\sqrt{x+1}}{\sqrt{x\sqrt{x+1}}} \right)\leq  2|t-x|\|f' \psi\|\frac{u(x)}{\psi(x)}.
\end{eqnarray*}
Using Lemma \ref{L1}, Remark \ref{rem1} and by the above inequality, we  can write as
\begin{eqnarray*}
|\Hat{\mathcal{R}}_{m,a}(g;x)-g(x)|&\leq &|\Hat{\mathcal{R}}_{m,a}(g-f;x)|+\left| \Hat{\mathcal{R}}_{m,a}(f;x)-f(x)\right|+|f(x)-g(x)|\\
&\leq & \Hat{\mathcal{R}}_{m,a}(|g-f|;x)+\Hat{\mathcal{R}}_{m,a}(|f(t)-f(x)|;x)+\|g-f\|\\
&\leq & 2\|g-f\|+2\|f' \psi\|\frac{u(x)}{\psi(x)}\Hat{\mathcal{R}}_{m,a}(|t-x|;x)\\
&\leq & 2\|g-f\|+2\|f' \psi\|\frac{u(x)}{\psi(x)}\left(\Hat{\mathcal{R}}_{m,a}((t-x)^2;x)\right)^{\frac{1}{2}}\\
&=& 2\|g-f\|+2\|f' \psi\|\frac{u(x)}{\psi(x)}\left(\Lambda_t^2(x)\right)^{\frac{1}{2}},
\end{eqnarray*}
taking infimum on right side over all $f\in \mathcal{W}_\psi[0,\infty)$, we get
\begin{eqnarray}
|\Hat{\mathcal{R}}_{m,a}(g;x)-g(x)|\leq 2K_\psi\left(g;\frac{u(x)\sqrt{\Lambda_t^2(x)}}{\psi(x)}\right).
\end{eqnarray}
Thus, the proof is completed.
\end{proof}
\"Ozarslan and Aktu\u{g}lu \cite{36} defined a new type of Lipschitz-space having two parameters. Let $u,v>0$ be fixed numbers, then Lipschitz-type-space is defined by:
\begin{eqnarray}
Lip_M^{u,v}(a)=\left\{g\in C[0,\infty):|g(y)-g(x)|\leq M \frac{|y-x|^a}{(y+ux^2+vx)^{\frac{a}{2}}};x,y\in[0,\infty) \right\},~~a\in(0,1].
\end{eqnarray}
Using the above definition, we have the local approximation result:
\begin{theorem}
Let $g\in Lip_M^{u,v}(a)$ with $a\in(0,1]$ then for every $x\geq 0$, it holds:
\begin{eqnarray}
|\Hat{\mathcal{R}}_{m,a}(g;x)-g(x)|\leq M \left( \frac{\Lambda_t^2(x)}{{ux^2+vx}}\right)^{\frac{\alpha}{2}}.
\end{eqnarray}
\end{theorem}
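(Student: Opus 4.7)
The plan is to proceed in a standard monotone-linearity fashion: bound the absolute error pointwise via the Lipschitz-type hypothesis, pass the bound under the positive linear operator $\Hat{\mathcal{R}}_{m,a}$, and then extract the factor $(ux^2+vx)^{-a/2}$ from the kernel so that only $\Hat{\mathcal{R}}_{m,a}(|t-x|^a;x)$ remains, which is handled by H\"older's inequality in tandem with Lemma \ref{L2}.

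More concretely, I would first note that since $\Hat{\mathcal{R}}_{m,a}(e_0;x)=1$ by Lemma \ref{L1}, one can write
\begin{eqnarray*}
|\Hat{\mathcal{R}}_{m,a}(g;x)-g(x)| \;\leq\; \Hat{\mathcal{R}}_{m,a}(|g(t)-g(x)|;x).
\end{eqnarray*}
Applying the definition of $Lip_M^{u,v}(a)$ inside the operator gives
\begin{eqnarray*}
|\Hat{\mathcal{R}}_{m,a}(g;x)-g(x)| \;\leq\; M\,\Hat{\mathcal{R}}_{m,a}\!\left(\frac{|t-x|^{a}}{(t+ux^{2}+vx)^{a/2}};x\right).
\end{eqnarray*}

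The next step is the key observation that, since the operator is built from the positive measure $\mathfrak{R}(x,t)\,dt$ supported on $t\geq 0$, we have $t+ux^{2}+vx\geq ux^{2}+vx$ for every relevant $t$, so the denominator can be bounded from below by $(ux^{2}+vx)^{a/2}$ and pulled outside. This reduces the problem to estimating $\Hat{\mathcal{R}}_{m,a}(|t-x|^{a};x)$. For this, I would invoke H\"older's inequality with conjugate exponents $p=2/a$ and $q=2/(2-a)$ (valid because $a\in(0,1]$), together with $\Hat{\mathcal{R}}_{m,a}(e_{0};x)=1$, to obtain
\begin{eqnarray*}
\Hat{\mathcal{R}}_{m,a}(|t-x|^{a};x)\;\leq\;\bigl(\Hat{\mathcal{R}}_{m,a}((t-x)^{2};x)\bigr)^{a/2}\;=\;\bigl(\Lambda_{t}^{2}(x)\bigr)^{a/2}.
\end{eqnarray*}
Combining the two displays yields the stated inequality.

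There is no real obstacle here; the only subtlety is the degenerate case $x=0$, where $ux^{2}+vx=0$ makes the right-hand side of the theorem formally $0/0$ or infinite. I would either note this boundary case separately (the Lipschitz assumption forces $|g(t)-g(0)|\leq M\,t^{a/2}$, which together with $\Hat{\mathcal{R}}_{m,a}(t^{a/2};0)$ being finite gives a direct bound), or tacitly assume $x>0$ as is customary in such Lipschitz-type estimates. Otherwise, the proof is essentially an assembly of Lemma \ref{L2}, the positivity of $\Hat{\mathcal{R}}_{m,a}$, and H\"older's inequality.
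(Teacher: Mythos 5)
Your proposal is correct and follows essentially the same route as the paper: bound $|g(t)-g(x)|$ by the Lipschitz-type hypothesis, use $t\geq 0$ to pull $(ux^{2}+vx)^{-a/2}$ out of the operator, and reduce to $\Hat{\mathcal{R}}_{m,a}(|t-x|^{a};x)\leq(\Lambda_{t}^{2}(x))^{a/2}$ via H\"older. The only (harmless) differences are that you reach the final bound in a single H\"older application with $p=2/a$ rather than the paper's case split ($a=1$ versus $a\in(0,1)$) with H\"older at $p=1/a$ followed by Cauchy--Schwarz, and that you explicitly flag the degenerate point $x=0$, which the paper silently ignores.
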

\begin{proof}
We prove the above theorem within case for $a\in(0,1]$. So, far that, consider\\
\textbf{Case 1.} when $a=1$, then 
\begin{eqnarray*}
|\Hat{\mathcal{R}}_{m,a}(g;x)-g(x)|&\leq &\Hat{\mathcal{R}}_{m,a}(|g(t)-g(x);x)\\
&\leq & M \Hat{\mathcal{R}}_{m,a}\left(\frac{|t-x|}{(y+ux^2+vx)^{\frac{1}{2}}} \right)\\
&\leq & \frac{M}{(ux^2+vx)^{\frac{1}{2}}}\Hat{\mathcal{R}}_{m,a}(|t-x|;x)\\
&\leq & \frac{M}{(ux^2+vx)^{\frac{1}{2}}}\left(\Hat{\mathcal{R}}_{m,a}((t-x)^{2};x\right)^{\frac{1}{2}}\\
&=&\frac{M\sqrt{\Lambda_t^2(x)}}{(ux^2+vx)^{\frac{1}{2}}}.
\end{eqnarray*}
\textbf{Case 2.} when $a\in(0,1)$ the
\begin{eqnarray*}
|\Hat{\mathcal{R}}_{m,a}(g;x)-g(x)|&\leq &\Hat{\mathcal{R}}_{m,a}(|g(t)-g(x);x)\\
&\leq & M \Hat{\mathcal{R}}_{m,a}\left(\frac{|t-x|^{a}}{(y+ux^2+vx)^{\frac{a}{2}}} \right)\\
&\leq & \frac{M}{(ux^2+vx)^{\frac{a}{2}}}\Hat{\mathcal{R}}_{m,a}(|t-x|^a;x).
\end{eqnarray*}
Let $p=\frac{1}{a},~q=\frac{1}{1-a}$ and applying H\"older inequality, we get
\begin{eqnarray*}
|\Hat{\mathcal{R}}_{m,a}(g;x)-g(x)|&\leq & \frac{M} {(ux^2+vx)^{\frac{a}{2}}}\left( \Hat{\mathcal{R}}_{m,a}(|t-x|;x)\right)^a\\
&\leq & \frac{M}{(ux^2+vx)^{\frac{a}{2}}} \left( \Hat{\mathcal{R}}_{m,a}((t-x)^2;x)\right)^{\frac{a}{2}}\\
&=& M\left( \frac{\Lambda_t^2(x)}{ux^2+vx}\right)^{\frac{a}{2}}
\end{eqnarray*}
hence, the required result is obtained.
\end{proof}

\section{Rate of convergence in term of the derivative of bounded variation}

Now we determine the rate of convergence of the said operators  in the space of the function of bounded variation by considering $DBV[0,\infty)$, the set of all continuous function having derivative of bounded variation on every finite sub-interval of the $[0,\infty)$. One can observe that for each $g\in DBV[0,\infty)$, it can be written
\begin{scriptsize}
\begin{eqnarray}
g(x)=\int\limits_0^x h(s)~ds+g(0),
\end{eqnarray}
\end{scriptsize}
where $h$ is a function bounded of variation on each finite sub-interval of $[0,\infty)$. To determine the rate of convergence in the terms of function of bounded variation, for which the function $g\in DBV[0,\infty)$, we use an auxiliary operators $g_x$ such that
\begin{eqnarray}\label{eq5}
g_x(t) &= &  
\begin{cases}
    g(t)-g(x-),& 0\leq t<x,\\
    0,& t=x,\\
    g(t)-g(x+), &  x<t<\infty.
\end{cases} 
\end{eqnarray}

Moreover, we denote $V_a^b g$ as total variation of a real valued function $g$ defined on $[a,b]\subset[0,\infty)$ with the quantity
\begin{eqnarray}
V_a^b g=\underset{\mathcal{S}}\sup\left(\sum\limits_{j=0}^{n_{P}-1}|g(x_{j+1})-g(x_j)| \right),
\end{eqnarray} 
where, $\mathcal{S}$ is the set of all partition $P=\{a=x_0,\cdots,x_{n_P}=b\}$ of the interval $[a,b]$. 

\begin{lemma}\label{l3}
For all $x\in[0,\infty)$, $n\in\mathbb{N}$ there exist a positive constant $M>0$, it can be written as

\begin{enumerate}
\item{} $\mathrm{J}(x,y)= \int\limits_0^y \mathfrak{R}(x,t)~dt\leq \frac{C}{m(x-y)^2}(x(x+1)),~~0\leq y<x,$
\item{} $1-\mathrm{J}_n(x,z)= \int\limits_z^\infty \mathfrak{R}(x,t)~dt\leq \frac{C}{m(z-x)^2}(x(x+1)),~~~~x<z<\infty.$
\end{enumerate}

\end{lemma}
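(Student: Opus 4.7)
The plan is to prove both bounds by the standard trick of inserting a ratio that is at least $1$ on the integration range and then extending the integral to all of $[0,\infty)$ so that it becomes a second central moment, which has already been bounded in Lemma \ref{l2}.

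For part (1), since $0 \le t \le y < x$, we have $x-t \ge x-y > 0$, so $\frac{(x-t)^2}{(x-y)^2} \ge 1$. Hence
\begin{eqnarray*}
\mathrm{J}(x,y) \;=\; \int_0^y \mathfrak{R}(x,t)\,dt \;\le\; \int_0^y \frac{(x-t)^2}{(x-y)^2}\,\mathfrak{R}(x,t)\,dt \;\le\; \frac{1}{(x-y)^2}\int_0^\infty (t-x)^2\,\mathfrak{R}(x,t)\,dt.
\end{eqnarray*}
By the integral representation (\ref{NRV}), the last integral equals $\Hat{\mathcal{R}}_{m,a}((t-x)^2;x) = \Lambda_t^2(x)$, which by Lemma \ref{l2} is bounded by $\frac{C}{m}x(x+1)$. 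This yields the first estimate.

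For part (2), the argument is symmetric: for $x < z \le t$ we have $t-x \ge z-x > 0$, so $\frac{(t-x)^2}{(z-x)^2} \ge 1$, and
\begin{eqnarray*}
1-\mathrm{J}_n(x,z) \;=\; \int_z^\infty \mathfrak{R}(x,t)\,dt \;\le\; \frac{1}{(z-x)^2}\int_z^\infty (t-x)^2\,\mathfrak{R}(x,t)\,dt \;\le\; \frac{\Lambda_t^2(x)}{(z-x)^2} \;\le\; \frac{C}{m(z-x)^2}\,x(x+1),
\end{eqnarray*}
again using Lemma \ref{l2} in the final step.

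There is no real obstacle here; the only thing to be careful about is the direction of the inequality $(x-t)^2 \ge (x-y)^2$ on the respective subintervals, and the fact that enlarging the domain of integration from $[0,y]$ (resp.\ $[z,\infty)$) to $[0,\infty)$ is valid because the integrand $(t-x)^2 \mathfrak{R}(x,t)$ is nonnegative. The constant $C$ absorbs the constant from Lemma \ref{l2} (which in turn absorbed a $\tfrac{1}{3m}$ term into $\tfrac{C}{m}x(x+1)$), and both bounds come out in the stated form uniformly in $m$.
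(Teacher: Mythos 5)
Your proof is correct and follows essentially the same route as the paper: insert the factor $\left(\frac{x-t}{x-y}\right)^2\ge 1$ (resp.\ $\left(\frac{t-x}{z-x}\right)^2\ge 1$), enlarge the domain of integration to $[0,\infty)$, identify the result as the second central moment $\Lambda_t^2(x)$ via (\ref{NRV}), and invoke Lemma \ref{l2}. Your write-up is in fact slightly cleaner, since the paper's intermediate line carries a spurious factor of $y$ that disappears in its final estimate.
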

\begin{proof}
Here, $0\leq y<x$ and $x\geq0$ then we have

\begin{eqnarray*}
\int\limits_0^y \mathfrak{R}(x,t)~dt &\leq & \int\limits_0^y\left( \frac{x-t}{x-y}\right)^2\mathfrak{R}(x,t) ~dt\\
&\leq & \frac{\Lambda_t^2(x)y}{(x-y)^2}\\
&\leq & \frac{C}{(x-y)^2}\frac{x(x+1)}{m}.
\end{eqnarray*}

Similarly, other inequality can be proved.
\end{proof}
\begin{theorem}
Let $f\in DBV[0,\infty)$ and  $\forall~x\in[0,\infty)$, it holds

\begin{eqnarray*}
|\Hat{\mathcal{R}}_{m,a}(f;x)-f(x)|&\leq &  \frac{1}{4m}|f'(x+)+f'(x-)|+ \left(\frac{C}{4m}x(x+1)\right)^{\frac{1}{2}}|f'(x+)-f'(x-)|+\frac{C(x+1)}{m} \sum\limits_{k=0}^{[\sqrt{m}]}\left(V_{x-\frac{x}{k}}^sf'_x \right)\\
&&+\frac{x}{\sqrt{m}}\left(V_{x-\frac{x}{\sqrt{m}}}^xf'_x \right)
+ \frac{x}{\sqrt{m}} V_x^{x+\frac{x}{\sqrt{m}}}(f'_x)
+ \frac{C(x+1)}{m} \sum\limits_{k=0}^{[\sqrt{m}]} V_x^{x+\frac{x}{k}}(f'_x).
\end{eqnarray*}

\end{theorem}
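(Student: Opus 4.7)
The plan is to follow the now-standard Bojanic-Cheng strategy for estimating operators acting on $f\in DBV[0,\infty)$, combined with the kernel bounds in Lemma \ref{l3}. First I write
\[
\Hat{\mathcal R}_{m,a}(f;x)-f(x)=\int_0^\infty \mathfrak R(x,t)\,(f(t)-f(x))\,dt=\int_0^\infty\mathfrak R(x,t)\int_x^t f'(u)\,du\,dt,
\]
and decompose $f'(u)$ as
\[
f'(u)=\tfrac12\bigl(f'(x+)+f'(x-)\bigr)+\tfrac12\bigl(f'(x+)-f'(x-)\bigr)\operatorname{sgn}(u-x)+f'_x(u)+\bigl(f'(x)-\tfrac12(f'(x+)+f'(x-))\bigr)\chi_{\{x\}}(u),
\]
where $f'_x$ is the auxiliary function defined in \eqref{eq5} applied to $f'$. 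Integrating each piece from $x$ to $t$ and then against the kernel $\mathfrak R(x,t)$ gives four contributions.

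The first contribution is $\tfrac12(f'(x+)+f'(x-))\Lambda_t^1(x)$, which by Lemma \ref{l2} is at most $\tfrac{1}{4m}|f'(x+)+f'(x-)|$. The second contribution is $\tfrac12(f'(x+)-f'(x-))\,\Hat{\mathcal R}_{m,a}(|t-x|;x)$; applying Cauchy--Schwarz and Lemma \ref{l2} bounds it by $\bigl(\tfrac{C}{4m}x(x+1)\bigr)^{1/2}|f'(x+)-f'(x-)|$. The fourth (point-mass) contribution vanishes because the kernel is integrable, so the whole burden falls on the term
\[
E_m(x):=\int_0^\infty\mathfrak R(x,t)\left(\int_x^t f'_x(u)\,du\right)dt,
\]
which I split as $E_m(x)=E_m^-(x)+E_m^+(x)$ over $t<x$ and $t>x$.

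For $E_m^-(x)$ I split again at $x-\tfrac{x}{\sqrt m}$. On the close part $[x-\tfrac{x}{\sqrt m},x]$ I use $|f'_x(u)|\le V_u^x f'_x$ together with $\int_0^x|t-x|\mathfrak R(x,t)\,dt\le\sqrt{\Lambda_t^2(x)}$, which (after one integration by parts and bounding $\Lambda_t^2$ via Lemma \ref{l2}) produces the term $\tfrac{x}{\sqrt m}V_{x-x/\sqrt m}^x f'_x$. On the far part $[0,x-\tfrac{x}{\sqrt m}]$ I integrate by parts using the Lebesgue--Stieltjes measure $d_t\mathrm J(x,t)$ and invoke Lemma \ref{l3}(1), which gives the kernel-tail bound $\le\tfrac{C\,x(x+1)}{m(x-t)^2}$; then I partition $[0,x-\tfrac{x}{\sqrt m}]$ dyadically via $y_k:=x-\tfrac{x}{k}$ for $k=1,\dots,[\sqrt m]$, on each sub-interval control $V_{y_{k+1}}^{y_k}f'_x\le V_{y_{k+1}}^x f'_x$, and telescope the geometric sum of $1/(x-y_k)^2$ to obtain exactly $\tfrac{C(x+1)}{m}\sum_{k=0}^{[\sqrt m]}V_{x-x/k}^x f'_x$. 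The estimate for $E_m^+(x)$ is symmetric, using Lemma \ref{l3}(2) and the partition $z_k:=x+\tfrac{x}{k}$, producing the remaining two terms.

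The only non-routine step is the partition/telescoping argument on the ``far'' intervals: one has to choose the partition so that the $1/(x-y_k)^2$ weights from Lemma \ref{l3} combine with the successive variations $V_{y_{k+1}}^{y_k}f'_x$ to yield the announced $\sum_{k=0}^{[\sqrt m]}$ bound, and to ensure the boundary term from integration by parts at $t=x-\tfrac{x}{\sqrt m}$ is absorbed into the close-part estimate. Once this bookkeeping is done correctly, summing the four bounds reproduces the stated inequality.
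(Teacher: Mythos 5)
Your proposal follows essentially the same route as the paper: the Bojanic--Cheng decomposition of $f'$ into the mean of one-sided derivatives, the signum term, the auxiliary function $f'_x$, and a point mass; the first two pieces handled by Lemma \ref{l2} and Cauchy--Schwarz; and the $f'_x$ term split at $x\pm\tfrac{x}{\sqrt m}$, with $|\mathrm{J}(x,s)|\le 1$ on the near parts and Lemma \ref{l3} plus the change of variables/partition $x\mp\tfrac{x}{k}$ on the far parts. The only cosmetic difference is that you phrase the far-part bookkeeping as a telescoping partition while the paper substitutes $s=x(1\pm\tfrac1\eta)$ and majorizes the resulting integral by the sum over $k\le[\sqrt m]$; these are the same estimate.
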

\begin{proof}
Using the hypothesis (\ref{eq5}), we have

\begin{eqnarray}\label{bv1}
f'(s)&=& f_x'(s)+\frac{1}{2}(f'(x+)+f'(x-))+\frac{1}{2}(f'(x+)-f'(x-))\text{sgn}(s-x)\nonumber\\
&&+\sigma_x(s)\{f'(s)-\frac{1}{2}(f'(x+)+f'(x-)) \},
\end{eqnarray}

where $\sigma_x(s)$
\begin{eqnarray}\label{bv2}
\sigma_x(s)=
\begin{cases}
1 & s=x\\
0 & s\neq x.
\end{cases}
\end{eqnarray}
By equation (\ref{NRV}), it can be written as

\begin{eqnarray}\label{bv3}
\Hat{\mathcal{R}}_{m,a}(f;x)-f(x)&=& \int\limits_0^\infty \mathfrak{R}(x,s)(f(s)-f(x))~ds\nonumber\\
&=&  \int\limits_0^\infty \mathfrak{R}(x,s)\left(\int\limits_x^s f'(t)~dt\right)ds.
\end{eqnarray}

Here, it is clear that
\begin{eqnarray*}
\int\limits_x^s \sigma_x(t)~dt=0,
\end{eqnarray*}
therefore

\begin{eqnarray}\label{bv4}
\int\limits_0^\infty \mathfrak{R}(x,s)\int\limits_x^s\left(\sigma_x(t)\{f'(t)-\frac{1}{2}(f'(x+)+f'(x-))\}dt \right)ds=0.
\end{eqnarray}

By (\ref{NRV}), we yield

\begin{eqnarray}
\int\limits_0^\infty \mathfrak{R}(x,s)\left(\int\limits_x^s\frac{1}{2}(f'(x+)+f'(x-))~dt\right)ds &=&\frac{1}{2}(f'(x+)+f'(x-)) \int\limits_0^\infty \mathfrak{R}(x,s)(s-x)~ds\nonumber\\&=& \frac{1}{2}(f'(x+)+f'(x-)) \Lambda_s(x).
\end{eqnarray}

And 

\begin{eqnarray}\label{bv6}
\left|\int\limits_0^\infty \mathfrak{R}(x,s)\left(\frac{1}{2}\int\limits_x^s(f'(x+)-f'(x-))\text{sgn}(t-x)~dt\right)ds\right| &\leq & \frac{1}{2}|(f'(x+)-f'(x-))|\int\limits_0^\infty \mathcal{Y}_n^{[\alpha]}(x;s)|s-x|~ds\nonumber\\
&\leq & \frac{1}{2}|(f'(x+)-f'(x-))| \Hat{\mathcal{R}}_{m,a}(|s-x|;x)\nonumber\\
&\leq & \frac{1}{2}|(f'(x+)-f'(x-)|\left(\Lambda_s^2(x) \right)^{\frac{1}{2}}
\end{eqnarray}

Using Lemmas (\ref{l1}, \ref{l2}), we have following inequality holds:

\begin{eqnarray}\label{bv7}
|\Hat{\mathcal{R}}_{m,a}(f;x)-f(x)|&\leq &  \frac{1}{2}|f'(x+)+f'(x-)|\Lambda_t^1(x)+ \frac{1}{2}|f'(x+)-f'(x-)|\left(\Lambda_s^2(x)\right)^{\frac{1}{2}}\nonumber\\
&&+ \left|\int\limits_0^\infty \mathfrak{R}(x,s)\left(\frac{1}{2}\int\limits_x^s(f'_x(t))~dt\right)ds\right|
\end{eqnarray}

Here,
 
   \begin{eqnarray}\label{bv8}
   \nonumber \int\limits_0^\infty \mathfrak{R}(x,s)\left(\int\limits_x^s(f'_x(u))~du\right)ds &=& \int\limits_0^x \mathfrak{R}(x,s)\left(\int\limits_x^s(f'_x(t))~dt\right)ds+\int\limits_x^s \mathfrak{R}(x,s)\left(\int\limits_x^s(f'_x(t))~dt\right)ds\\
   &=&L_1+L_2,
   \end{eqnarray}

where 

\begin{eqnarray}
L_1 &=& \int\limits_0^x \left(\int\limits_x^s(f'_x(t))~dt\right)\frac{\partial}{\partial s}(\mathrm{J}(x,s))ds\nonumber\\
&=& \int\limits_0^x f'_x(s)\mathrm{J}(x,s)ds\nonumber\\
&=& \int\limits_0^y f'_x(s)\mathrm{J}(x,s)ds+\int\limits_y^x f'_x(s)\mathrm{J}(x,s)ds
\end{eqnarray}

Here, we consider $y=x-\frac{x}{\sqrt{m}}$ then by above equality, one can write

\begin{eqnarray}
\left|\int\limits_{x-\frac{x}{\sqrt{m}}}^x f'_x(s)\mathrm{J}(x,s)ds \right|&\leq & \int\limits_{x-\frac{x}{\sqrt{m}}}^x |f'_x(s)||\mathrm{J}(x,s)|ds\nonumber\\
&\leq & \int\limits_{x-\frac{x}{\sqrt{m}}}^x |f'_x(s)-f'_x(x)|ds,~~~~f'_x(x)~=0, (\text{where}~|\mathrm{J}(x,s)|\leq 1)\nonumber\\
&\leq & \int\limits_{x-\frac{x}{\sqrt{n}}}^x V_s^xf'_x ds\nonumber\\&\leq & V_{x-\frac{x}{\sqrt{m}}}^xf'_x\int\limits_{x-\frac{x}{\sqrt{m}}}^x ds\nonumber\\
&=&\frac{x}{\sqrt{m}}\left(V_{x-\frac{x}{\sqrt{m}}}^xf'_x \right)
\end{eqnarray}

Using  Lemma \ref{l3} for solving second term,  we get

\begin{eqnarray}
\int\limits_x^{x-\frac{x}{\sqrt{m}}} |f'_x(s)|\mathrm{J}(x,s)ds &\leq & C\frac{x(x+1)}{m} \int\limits_x^{x-\frac{x}{\sqrt{m}}} \frac{|f'_x(s)|}{(x-s)^2} ds\nonumber\\
&\leq & C\frac{x(x+1)}{m} \int\limits_x^{x-\frac{x}{\sqrt{m}}} V_s^xf'_x\frac{1}{(x-s)^2}\nonumber\\
&=& C\frac{x(x+1)}{xm} \int\limits_x^{\sqrt{m}} V_{x-\frac{x}{p}}^sf'_x dp\nonumber\\
&\leq & C\frac{(x+1)}{m} \sum\limits_{k=0}^{[\sqrt{m}]}\left(V_{x-\frac{x}{k}}^sf'_x \right).
\end{eqnarray}

Hence, 

\begin{eqnarray}
|L_1|\leq \frac{C(x+1)}{m} \sum\limits_{k=0}^{[\sqrt{m}]}\left(V_{x-\frac{x}{k}}^sf'_x \right)+\frac{x}{\sqrt{m}}\left(V_{x-\frac{x}{\sqrt{m}}}^xf'_x \right).
\end{eqnarray}

To solve $L_2$, we reform $L_2$ and integrating by parts, we have

\begin{eqnarray*}
|L_2|&=&\Bigg| \int\limits_x^z\left(\int\limits_x^sf'_x(t)dt \right)\frac{\partial}{\partial s}(1-\mathrm{J}(x,s))ds + \int\limits_z^\infty\left(\int\limits_x^sf'_x(t)dt \right)\frac{\partial}{\partial s}(1-\mathrm{J}(x,s))ds\Bigg| \\
&\leq & \left|\int\limits_x^z\left(\int\limits_x^sf'_x(t)dt \right)\frac{\partial}{\partial s}(1-\mathrm{J}(x,s))ds  \right| + \left| \int\limits_z^\infty\left(\int\limits_x^sf'_x(u)du \right)\frac{\partial}{\partial s}(1-\mathrm{J}(x,s))ds     \right|\\
&=& \Bigg|  \left[\int\limits_x^sf'_x(t)dt (1-\mathrm{J}(x,s))\right]_x^z-\int\limits_x^z f'_x(s)(1-\mathrm{J}(x,s))ds \\
&& +  \left[\int\limits_x^sf'_x(t)dt (1-\mathrm{J}(x,s)) \right]_z^\infty - \int\limits_z^\infty f'_x(s)(1-\mathrm{J}(x,s))  ds\Bigg| \\
&=& \Bigg| \int\limits_x^z f'_x(t)dt (1-\mathrm{J}(x,z)) -\int\limits_x^z f'_x(s)(1-\mathrm{J}(x,s))ds \\
&&  -\int\limits_x^z f'_x(t)dt (1-\mathrm{J}(x,z))-\int\limits_z^\infty f'_x(s)(1-\mathrm{J}(x,s))  ds \Bigg|\\
&=& \Bigg| \int\limits_x^z f'_x(s)(1-\mathrm{J}(x,s))ds +\int\limits_z^\infty f'_x(s)(1-\mathrm{J}(x,s))  ds\Bigg|\\
&\leq &  \int\limits_x^z V_x^s (f'_x) ds+ \frac{Cx(x+1)}{m}\int\limits_z^\infty V_x^s(f'_x) \frac{1}{(s-x)^2}ds\\
&\leq & \frac{x}{\sqrt{m}} V_x^{x+\frac{x}{\sqrt{m}}}(f'_x)+\frac{Cx(x+1)}{m}\int\limits_{x+\frac{x}{\sqrt{m}}}^\infty V_x^s(f'_x) \frac{1}{(s-x)^2}ds.
\end{eqnarray*}

On substituting $s=x\left(1+\frac{1}{\eta} \right)$, we obtain

\begin{eqnarray*}
|L_2|&\leq & \frac{x}{\sqrt{m}} V_x^{x+\frac{x}{\sqrt{m}}}(f'_x)+\frac{C(x+1)}{m}\int\limits_{0}^{\sqrt{m}} V_x^{x+\frac{x}{\eta}}(f'_x) d\eta\\
&\leq & \frac{x}{\sqrt{m}} V_x^{x+\frac{x}{\sqrt{m}}}(f'_x)+ \frac{C(x+1)}{m} \sum\limits_{k=0}^{[\sqrt{m}]}\int\limits_{k}^{\sqrt{k+1}}V_x^{x+\frac{x}{k}}(f'_x) d\eta\\
&=& \frac{x}{\sqrt{m}} V_x^{x+\frac{x}{\sqrt{m}}}(f'_x)+ \frac{C(x+1)}{m} \sum\limits_{k=0}^{[\sqrt{m}]} V_x^{x+\frac{x}{k}}(f'_x).
\end{eqnarray*}

Using the value of $L_1, L_2$ in (\ref{bv8}) and with the help of (\ref{bv7}), we obtain required result.

\end{proof}

\section{Quantitative Approximation}\label{sec4}
In 2007, Ispir \cite{26} proposed the weighted modulus of continuity $\Delta(g;\xi)$  for any $\xi>0$, in the weighted space $C_w^k[0,\infty)$ to estimate the degree of approximation, which is as follows:

\begin{eqnarray}
\Delta(g;\xi)=\underset{0\leq h\leq\xi,~0\leq x\leq\infty}\sup \frac{|g(x+h)-g(x)|}{(1+h^2)(1+x^2)},~~~~~~~g\in C_w^k[0,\infty). 
\end{eqnarray} 
where the weighted space is defined as $C_w^k[0,\infty)=\{g\in C_w[0,\infty),\underset{x\to\infty}\lim\frac{|g(x)|}{w(x)}<+\infty\}$, $C_w[0,\infty)=\{g\in B_w[0,\infty), g~\text{is continuous} \} $, $B_w[0,\infty)=\{g:[0,\infty)\to\mathbb{R}| |g(x)|Mw(x)\},$ here $M$ (depending on the function) is a positive constant and $w(x)=1+x^2$ is a weight function.

 \begin{remark}
For $g\in C_w^k[0,\infty)$
 \begin{eqnarray*}
  \underset{\xi\to 0}\lim\Delta(g;\xi)=0.
 \end{eqnarray*}
 \end{remark}
 On can obtains as, $\Delta(f;\lambda\xi)\leq2(1+\xi^2)(1+\lambda)\Delta(f;\xi),~~\lambda>0$. 
 
Using the  weighted modulus of continuity and defined inequality, one can show that
 \begin{eqnarray}
 \nonumber|g(t)-g(x)|&\leq &(1+x^2)(1+(t-x)^2)\Delta(g;|t-x|)\\
 &\leq & 2\left(1+\frac{|t-x|}{\xi}\right)(1+\xi^2)(1+(t-x)^2)(1+x^2)\Delta(f;|t-x|).
 \end{eqnarray}

As the consequence of the weighted modulus of continuity, we determine the degree of approximation of the operators $\mathcal{U}_{n}^{[\alpha]}(g;x)$ in the weighted space $C_w^k[0,\infty)$.

\subsection{Quantitative Voronovskaya type theorem}
\begin{theorem}\label{th}
Let $\mathrm{g}', \mathrm{g}''\in C_w^k[0,\infty)$ and for sufficiently large value of $m\in\mathbb{N}$, then for each $x\geq0$, it holds

\begin{eqnarray*}
m\left |\Hat{\mathcal{R}}_{m,a}(\mathrm{g};x)-\mathrm{g}(x)-\mathrm{g}'(x)\Lambda_t^1(x)-\frac{\mathrm{g}''(x)}{2!}\Lambda_t^2(x)\right|=O(1)\Delta\left(\mathrm{g};\sqrt{\frac{1}{m}}\right).
\end{eqnarray*}

\end{theorem}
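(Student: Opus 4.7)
The plan is to invoke the classical Voronovskaya template built on a second-order Taylor expansion, with the remainder controlled by the weighted modulus $\Delta$. Writing
\[
g(t) = g(x) + g'(x)(t-x) + \frac{g''(x)}{2!}(t-x)^2 + h(t,x)(t-x)^2,
\]
where, by the Lagrange form of the remainder, $h(t,x) = \tfrac{1}{2}(g''(\xi) - g''(x))$ for some $\xi$ between $x$ and $t$ (so that $h(t,x) \to 0$ as $t \to x$), and then applying $\Hat{\mathcal{R}}_{m,a}$ to both sides, the moment identities of Lemma \ref{L2} collapse the error to
\[
\Hat{\mathcal{R}}_{m,a}(g;x) - g(x) - g'(x)\Lambda_t^1(x) - \frac{g''(x)}{2!}\Lambda_t^2(x) = \Hat{\mathcal{R}}_{m,a}\bigl(h(t,x)(t-x)^2;x\bigr).
\]
The whole task then reduces to bounding $m\,\Hat{\mathcal{R}}_{m,a}(|h(t,x)|(t-x)^2;x)$ by $O(1)\Delta(g'';1/\sqrt{m})$.

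Next I would control $|h(t,x)|$ using the weighted modulus machinery of Section \ref{sec4}. From the defining inequality applied to $g''$ and the rescaling $\Delta(g'';\lambda\delta) \leq 2(1+\delta^2)(1+\lambda)\Delta(g'';\delta)$, chosen with $\delta = 1/\sqrt{m}$ and $\lambda = \sqrt{m}\,|t-x|$, one obtains
\[
|h(t,x)| \leq C(1+x^2)\bigl(1+(t-x)^2\bigr)\bigl(1+\sqrt{m}\,|t-x|\bigr)\Delta\!\left(g'';\tfrac{1}{\sqrt{m}}\right)
\]
uniformly for sufficiently large $m$. Multiplying by $(t-x)^2$ and applying the operator produces four contributions, involving $\Lambda_t^2$, $\Lambda_t^4$, and (via Cauchy-Schwarz, since the odd powers carry an absolute value) the cross terms $\Hat{\mathcal{R}}_{m,a}(|t-x|^3;x) \leq \sqrt{\Lambda_t^2\,\Lambda_t^4}$ and $\Hat{\mathcal{R}}_{m,a}(|t-x|^5;x) \leq \sqrt{\Lambda_t^4\,\Lambda_t^6}$.

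To close the estimate, I would record from Lemma \ref{l1}, supplemented by a direct computation from Lemma \ref{L2} for the fourth moment, that $\Lambda_t^2(x) = O(m^{-1})$, $\Lambda_t^4(x) = O(m^{-2})$, and $\Lambda_t^6(x) = O(m^{-3})$. Substituting, the four contributions have orders $m^{-1}$, $m^{-2}$, $\sqrt{m}\cdot m^{-3/2} = m^{-1}$, and $\sqrt{m}\cdot m^{-5/2} = m^{-2}$, respectively; their sum is $O(m^{-1})$, so multiplication by the external factor $m$ yields exactly $O(1)\Delta(g'';1/\sqrt{m})$, matching the claim.

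The main obstacle I anticipate is the bookkeeping inside the remainder estimate: rescaling the modulus to step $1/\sqrt{m}$ unavoidably introduces the factor $1 + \sqrt{m}\,|t-x|$, which forces odd-power central moments back into the calculation. These must be handled by Cauchy-Schwarz and matched against the higher even moments. The term involving $|t-x|^5$ is borderline and only just survives multiplication by $m$; it critically requires the bound $\Lambda_t^6 = O(m^{-3})$ supplied by Lemma \ref{l1}. The factor $(1+x^2)$ is absorbed as an $x$-dependent constant, consistent with the pointwise nature of the statement.
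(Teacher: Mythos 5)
Your proposal is correct and follows the same overall strategy as the paper: a second-order Taylor expansion with Lagrange remainder, control of the remainder through the weighted modulus $\Delta(\mathrm{g}'';\cdot)$ with the step $\xi=1/\sqrt{m}$, and the central-moment asymptotics of Lemma \ref{l1}. The one place you diverge is in converting the modulus bound into polynomial control of the remainder: the paper splits into the cases $|t-x|<\xi$ and $|t-x|\ge\xi$, which yields the purely even-power bound $|\zeta(t,x)|\le 8(1+x^2)\big((t-x)^2+(t-x)^6/\xi^4\big)\Delta(\mathrm{g}'';\xi)$ and therefore needs only $\Lambda_t^2=O(m^{-1})$ and $\Lambda_t^6=O(m^{-3})$; you instead keep the product form $(1+(t-x)^2)(1+\sqrt{m}\,|t-x|)$, which forces the odd powers $|t-x|^3$ and $|t-x|^5$ back into the computation and obliges you to invoke Cauchy--Schwarz together with the extra estimate $\Lambda_t^4=O(m^{-2})$ (true, and verifiable from Lemma \ref{L2}, but not recorded anywhere in the paper). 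Both routes close with the same order count $O(m^{-1})$ before the external factor $m$, so your version is a legitimate, marginally heavier bookkeeping variant; the paper's dichotomy is the cleaner of the two because it bypasses the odd moments and the fourth moment entirely.
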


\begin{proof}
By Taylor's expansion, it can be written as:

\begin{eqnarray}
\mathrm{g}(t)-\mathrm{g}(x)=\mathrm{g}'(x)\xi_x(t)+\frac{\mathrm{g}''(x)}{2}\xi_x^2(t)+\zeta(t,x),
\end{eqnarray}

where $\zeta(t,x)=\frac{\mathrm{g}''(\theta)-\mathrm{g}''(x)}{2!}(\theta-x)^2$ and $\zeta\in (t,x)$.
Applying operators (\ref{RV}) on both sides to the above expansion, then one can obtains 

\begin{eqnarray}\label{n1}
m\left|\Hat{\mathcal{R}}_{m,a}(\mathrm{g};x)-\mathrm{g}(x)-\mathrm{g}'(x)\Lambda_t(x)-\frac{\mathrm{g}''(x)}{2}\Lambda_t^2(x)\right|\leq m \Hat{\mathcal{R}}_{m,a}(|\zeta(t,x)|;x).
\end{eqnarray}

Now using the property of weighted modulus of continuity, we get

\begin{eqnarray*}
\frac{\mathrm{g}''(\theta)-\mathrm{g}''(x)}{2} 
&\leq & \left(1+\frac{|s-x|}{\xi} \right)(1+\xi^2)(1+(t-x)^2)(1+x^2)\Delta(f'',\xi)
\end{eqnarray*}

and also 

\begin{eqnarray}
\left|\frac{\mathrm{g}''(\theta)-\mathrm{g}''(x)}{2}\right| &\leq &  
\begin{cases}
    2(1+\xi^2)^2(1+x^2)\Delta(\mathrm{g}'',\xi),& |s-x|<\xi,\\
    2(1+\xi^2)^2(1+x^2)\frac{(s-x)^4}{\xi^4}\Delta(\mathrm{g}'',\xi),& |s-x|\geq\xi.
\end{cases} 
\end{eqnarray}

Now for $\xi\in(0,1)$, we get

\begin{eqnarray}
\left|\frac{\mathrm{g}''(\theta)-\mathrm{g}''(x)}{2}\right| &\leq & 8(1+x^2)\left(1+\frac{(s-x4}{\xi^4}\right)\Delta(\mathrm{g}'',\xi). 
\end{eqnarray}

Hence,
$$(|\zeta(s,x)|;x)\leq 8(1+x^2)\left((s-x)^2+\frac{(s-x)^6}{\xi^4}\right)\Delta(\mathrm{g}'',\xi).$$

Thus, applying the Lemma \ref{l1}, one can obtain

\begin{eqnarray*}
\Hat{\mathcal{R}}_{m,a}(| \zeta(t,x)|;x)&\leq & 8(1+x^2)\Delta(\mathrm{g}'',\xi)\left(\Hat{\mathcal{R}}_{m,a}((s-x)^2;x)+\frac{\Hat{\mathcal{R}}_{m,a}((s-x)^6;x)}{\xi^4}\right) \\
&\leq & 8(1+x^2)\Delta(\mathrm{g}'',\xi) \left(O\left(\frac{1}{m} \right)+\frac{1}{\xi^4} O\left(\frac{1}{m^3} \right) \right),~~\text{as}~m\to\infty.
\end{eqnarray*}

Choose, $\xi=\sqrt{\frac{1}{n}}$, then

\begin{eqnarray}
\Hat{\mathcal{R}}_{m,a}(| \zeta(t,x)|;x)\leq 8 O\left(\sqrt{\frac{1}{m}} \right)\Delta\left(\mathrm{g}'',\sqrt{\frac{1}{m}}\right)(1+x^2).
\end{eqnarray}

Hence, we reach on 

\begin{eqnarray}\label{n2}
m\Hat{\mathcal{R}}_{m,a}(|\zeta(s,x)|;x)=O(1)\Delta\left(\mathrm{g}'', \sqrt{\frac{1}{m}}\right).
\end{eqnarray}

By (\ref{n1}) and (\ref{n2}), we obtain the required result.
\end{proof}
Using the above theorem, we obtain a new asymptotic type formula for the defined operators using two functions from the weighted space $C_w^k[0,\infty)$.
\subsection{Gr$\ddot{\text{u}}$ss Voronovskaya type theorem}
\begin{theorem}\label{th2}
Let $\mu,\nu\in C_w^k[0,\infty)$ then for $\mu', \mu'', \nu', \nu''\in C_w^k[0,\infty)$, it holds

\begin{eqnarray*}
\underset{m\to\infty}\lim m\left(\Hat{\mathcal{R}}_{m,a}(\mu\nu;x)-\Hat{\mathcal{R}}_{m,a}(\mu;x)\Hat{\mathcal{R}}_{m,a}(\nu;x) \right)= x \mu'(x)\nu'(x). 
\end{eqnarray*}

\end{theorem}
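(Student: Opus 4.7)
The approach is to reduce the Grüss--Voronovskaya statement to three applications of the quantitative Voronovskaya theorem (Theorem \ref{th}) combined with Theorem \ref{th1}. The starting point is the elementary algebraic identity, obtained by adding and subtracting $\mu(x)\nu(x)$ and $\mu(x)\Hat{\mathcal{R}}_{m,a}(\nu;x)$:
\begin{align*}
\Hat{\mathcal{R}}_{m,a}(\mu\nu;x) - \Hat{\mathcal{R}}_{m,a}(\mu;x)\Hat{\mathcal{R}}_{m,a}(\nu;x)
&= \bigl[\Hat{\mathcal{R}}_{m,a}(\mu\nu;x) - \mu(x)\nu(x)\bigr] \\
&\quad - \Hat{\mathcal{R}}_{m,a}(\nu;x)\bigl[\Hat{\mathcal{R}}_{m,a}(\mu;x) - \mu(x)\bigr] \\
&\quad - \mu(x)\bigl[\Hat{\mathcal{R}}_{m,a}(\nu;x) - \nu(x)\bigr].
\end{align*}
After multiplying by $m$, the first and third brackets scaled by $m$ admit finite limits via Theorem \ref{th}, while in the second bracket the prefactor $\Hat{\mathcal{R}}_{m,a}(\nu;x)$ tends to $\nu(x)$ by Theorem \ref{th1}.

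Before combining, I need the two scalar limits demanded by Theorem \ref{th}. Lemma \ref{l1}(1) already supplies $\lim_m m\Lambda_t^2(x) = x$. For the first central moment, which is not listed, I would expand $a^{1/m} - 1 = \tfrac{\log a}{m} + \tfrac{(\log a)^2}{2m^2} + O(m^{-3})$, invert the series to get $\tfrac{x\log a}{a^{1/m}-1} = mx - \tfrac{x\log a}{2} + O(m^{-1})$, and substitute into $m\Lambda_t^1(x) = \tfrac{1}{2} - mx + \tfrac{x\log a}{a^{1/m}-1}$ to conclude $\lim_m m\Lambda_t^1(x) = \tfrac{1-x\log a}{2}$.

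With these in hand, Theorem \ref{th} applied successively to $\mu\nu$, $\mu$ and $\nu$ yields the limits of the three scaled brackets as
\begin{align*}
&(\mu\nu)'(x)\tfrac{1-x\log a}{2} + \tfrac{x}{2}(\mu\nu)''(x),\\
&\nu(x)\!\left[\mu'(x)\tfrac{1-x\log a}{2} + \tfrac{x}{2}\mu''(x)\right],\\
&\mu(x)\!\left[\nu'(x)\tfrac{1-x\log a}{2} + \tfrac{x}{2}\nu''(x)\right].
\end{align*}
Expanding $(\mu\nu)' = \mu'\nu + \mu\nu'$ and $(\mu\nu)'' = \mu''\nu + 2\mu'\nu' + \mu\nu''$, every first-derivative contribution and every pure second-derivative contribution in the first line cancels exactly against the other two lines, leaving the cross term $\tfrac{x}{2}\cdot 2\mu'(x)\nu'(x) = x\mu'(x)\nu'(x)$, as required.

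The principal obstacle I anticipate is not the algebra but a hypothesis check: invoking Theorem \ref{th} on the product $\mu\nu$ requires $(\mu\nu)'$ and $(\mu\nu)''$ to belong to $C_w^k[0,\infty)$, and this weighted class is not closed under pointwise multiplication since the weight $w(x)=1+x^2$ is not submultiplicative with respect to itself. I would circumvent this either by strengthening the working hypothesis on $\mu$ and $\nu$ so that the relevant products of derivatives are of at most quadratic growth, or by a localisation argument, since only the pointwise behaviour at the fixed $x\ge 0$ enters the final limit.
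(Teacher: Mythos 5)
Your proof is correct and follows essentially the same route as the paper: both reduce the Gr\"uss--Voronovskaya statement to the quantitative Voronovskaya theorem applied to $\mu\nu$, $\mu$ and $\nu$ together with the moment limits, the only difference being that the paper builds the Leibniz-rule cancellation into its algebraic identity (so that the surviving term is directly $\mu'(x)\nu'(x)\Lambda_t^2(x)$, with $m\Lambda_t^2(x)\to x$ from Lemma \ref{l1}), whereas you cancel at the level of limits and therefore also need $\lim_{m\to\infty} m\Lambda_t^1(x)=\tfrac{1}{2}(1-x\log a)$, which you compute correctly even though it is absent from Lemma \ref{l1}. Your closing observation that $C_w^k[0,\infty)$ is not closed under products, so that applying the quantitative Voronovskaya theorem to $\mu\nu$ requires an extra hypothesis or a localisation argument, identifies a genuine gap that the paper's own proof passes over in silence.
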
 
\begin{proof}
By making suitable arrangement and using well known properties of derivative of multiplication of two functions, we get

\begin{eqnarray*}
 m\left(\Hat{\mathcal{R}}_{m,a}(\mu\nu;x)-\Hat{\mathcal{R}}_{m,a}(\mu;x)\Hat{\mathcal{R}}_{m,a}(\nu;x) \right)&=& m\Bigg\{\Bigg(\Hat{\mathcal{R}}_{m,a}(\mu\nu;x)-\mu(x)\nu(x)-(\mu\nu)'\Lambda_t^1(x)-\frac{(\mu\nu)''}{2!}\Lambda_t^2(x)\Bigg)\\
 &&-\mathsf{g}(x)\Bigg(\Hat{\mathcal{R}}_{m,a}(\mu;x)-\mu(x)-\mu'(x)\Lambda_t^1(x)-\frac{\mu''(x)}{2!}\Lambda_t^2(x) \Bigg)\\
 &&-\Hat{\mathcal{R}}_{m,a}(\mu;x)\Bigg(\Hat{\mathcal{R}}_{m,a}(\nu;x)-\nu(x)-\nu'(x)\Lambda_t^1(x)-\frac{\nu''(x)}{2!}\Lambda_t^2(x) \Bigg)\\
 &&+\frac{\nu''(x)}{2!}\Hat{\mathcal{R}}_{m,a}((t-x)^2;x)\left(\mu- \Hat{\mathcal{R}}_{m,a}(\mu;x)\right)+\mu'(x)\nu'(x)\Lambda_t^2(x)\\
 &&+ \nu'(x)\Lambda_t^1(x)\left(\mu- \Hat{\mathcal{R}}_{m,a}(\mu;x)\right) \Bigg\}.
\end{eqnarray*}

For sufficiently large value of $m$, i.e. for $m\to\infty$, with the help of Theorems \ref{th1} and \ref{th}, Lemma \ref{l1}, the proof completed  after talking the limit on both sides to the  above equality.
\end{proof}

\section{Graphical Representation}
In this segment, the graphical approach are shown for the said operators (\ref{RV}) regarding convergence to the given function.

\begin{example}
Consider the function $f(x)=x^2e^{x}$ with $x\in[0,5]$. Here we take the value of $m=10,25,100$ for the given operators ${\mathcal{R}}_{m,a}(f;x)$ and fixed value of $a=2$. Then we obtain, the better  approximation by the said operators as we increase the vale of $m$.
\begin{figure}[h!]
    \centering 
    \includegraphics[width=.42\textwidth]{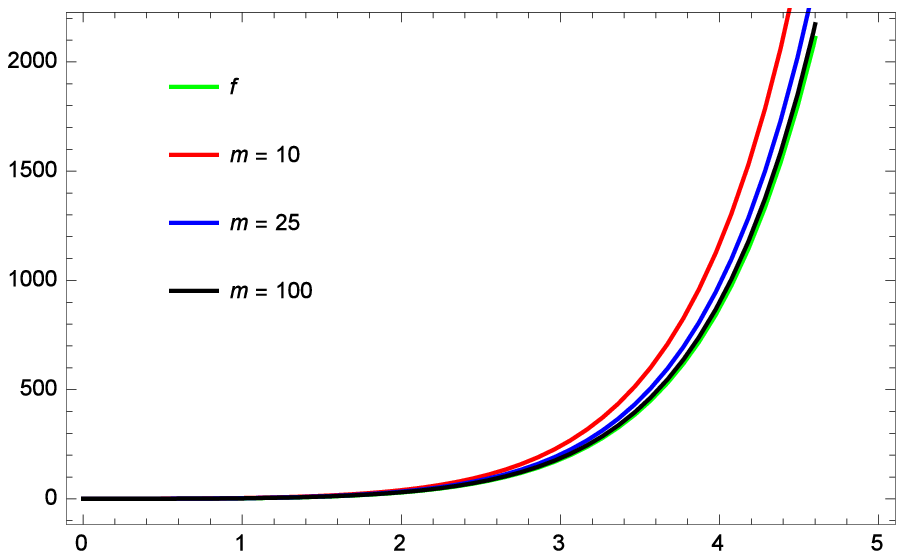}   
    \caption[Description in LOF, taken from~\cite{source}]{The convergence of the operators $\Hat{\mathcal{R}}_{m,a}(f;x)$ to the function $f(x)(green)$.}
    \label{y1}
\end{figure}
\end{example}

\begin{example}
Here, we take the function $f(x)=x\cos{(2x+1)}$ and $x\in[0,5]$, then the approximation of the given function by the said operators take place by graphical representation. Here errors decrease, as the value of $m$ is increased.
\begin{figure}[h!]
    \centering 
    \includegraphics[width=.42\textwidth]{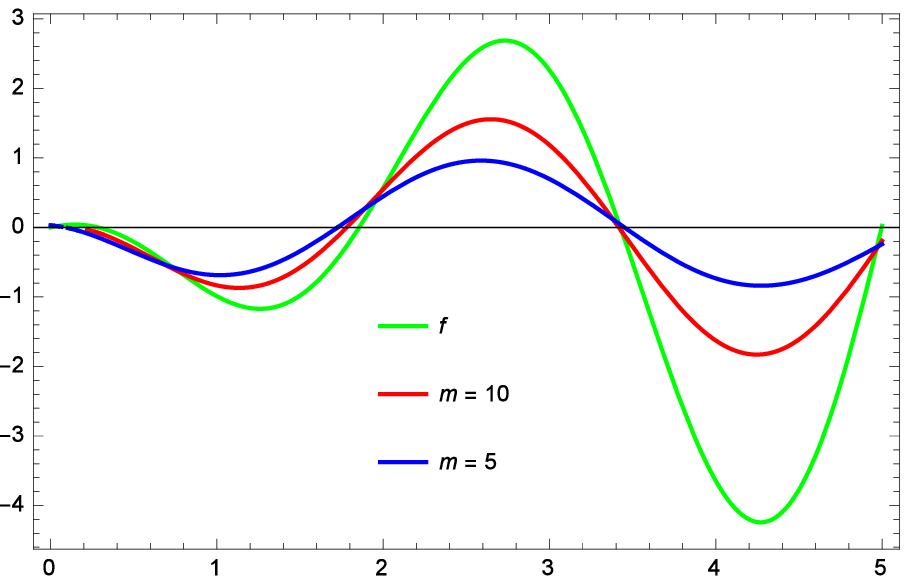}   
    \caption[Description in LOF, taken from~\cite{source}]{The convergence of the operators $\Hat{\mathcal{R}}_{m,a}(f;x)$ to the function $f(x)(green)$.}
    \label{y2}
\end{figure}
\end{example}

\subsection{Concluding Remark}
As we increase the value of $m$, the approximation is good, i.e. for the large value of $m$ the error is minimum.

\end{document}